\newcommand{\rmB}{\mathrm{B}}
\begin{document}
\date{January 13, 2012}
%
%
%
\title{Separable Concave Optimization Approximately Equals Piecewise-Linear
  Optimization\footnote{This research is based on the second author's Ph.D.
    thesis at the Massachusetts Institute of Technology
    \cite{nostd:stratila-phd}. An extended abstract of this research has
    appeared in \cite{MR2144589}. }}
\author{Thomas L. Magnanti\footnote{School of Engineering and Sloan School
    of Management, Massachusetts Institute of Technology, 77 Massachusetts
    Avenue, Room 32-D784, Cambridge, MA 02139. E-mail: {\tt
      magnanti@mit.edu.}}
\and Dan Stratila\footnote{Rutgers Center for Operations Research and
  Rutgers Business School, Rutgers University, 640 Bartholomew Road, Room
  107, Piscataway, NJ 08854. E-mail: {\tt dstrat@rci.rutgers.edu.}}}
\begin{singlespace}
\maketitle
\end{singlespace}
{\abstract{
\begin{singlespace}
We study the problem of minimizing a nonnegative separable concave function
over a compact feasible set. We approximate this problem to within a factor
of $1+\epsilon$ by a piecewise-linear minimization problem over the same
feasible set. Our main result is that when the feasible set is a polyhedron,
the number of resulting pieces is polynomial in the input size of the
polyhedron and linear in $1/\epsilon$. For many practical concave cost
problems, the resulting piecewise-linear cost problem can be formulated as a
well-studied discrete optimization problem. As a result, a variety of
polynomial-time exact algorithms, approximation algorithms, and
polynomial-time heuristics for discrete optimization problems immediately
yield fully polynomial-time approximation schemes, approximation algorithms,
and polynomial-time heuristics for the corresponding concave cost problems.

We illustrate our approach on two problems. For the concave cost
multicommodity flow problem, we devise a new heuristic and study its
performance using computational experiments. We are able to approximately
solve significantly larger test instances than previously possible, and
obtain solutions on average within 4.27\% of optimality. For the concave
cost facility location problem, we obtain a new $1.4991 + \epsilon$
approximation algorithm.
\end{singlespace}
}}
%
%
%
\section{Introduction}
Minimizing a nonnegative separable concave function over a polyhedron arises
frequently in fields such as transportation, logistics, telecommunications,
and supply chain management. In a typical application, the polyhedral
feasible set arises due to network structure, capacity requirements, and
other constraints, while the concave costs arise due to economies of scale,
volume discounts, and other practical factors \cite[see
  e.g.][]{MR91k:90211}. The concave functions can be nonlinear,
piecewise-linear with many pieces, or more generally given by an oracle.

A natural approach for solving such a problem is to approximate each concave
function by a piecewise-linear function, and then reformulate the resulting
problem as a discrete optimization problem. Often this transformation can be
carried out in a way that preserves problem structure, making it possible to
apply existing discrete optimization techniques to the resulting problem. A
wide variety of techniques is available for these problems, including
heuristics \cite[e.g.][]{MR90h:90061,MR1663414}, integer programming methods
\cite[e.g.][]{MR1859158,MR1970120}, and approximation algorithms
\cite[e.g.][]{MR2146253}.

For this approach to be efficient, we need to be able to approximate the
concave cost problem by a single piecewise-linear cost problem that meets
two competing requirements. On one hand, the approximation should employ few
pieces so that the resulting problem will have small input size. On the
other hand, the approximation should be precise enough that by solving the
resulting problem we would obtain an acceptable approximate solution to the
original problem.

With this purpose in mind, we introduce a method for approximating a concave
cost problem by a piecewise-linear cost problem that provides a $1+\epsilon$
approximation in terms of optimal cost, and yields a bound on the number of
resulting pieces that is polynomial in the input size of the feasible
polyhedron and linear in $1/\epsilon$. Previously, no such polynomial bounds
were known, even if we allow any dependence on $1/\epsilon$.

Our bound implies that polynomial-time exact algorithms, approximation
algorithms, and polynomial-time heuristics for many discrete optimization
problems immediately yield fully polynomial-time approximation schemes,
approximation algorithms, and polynomial-time heuristics for the
corresponding concave cost problems. We illustrate this result by obtaining
a new heuristic for the concave cost multicommodity flow problem, and a new
approximation algorithm for the concave cost facility location problem.

Under suitable technical assumptions, our method can be generalized to
efficiently approximate the objective function of a maximization or
minimization problem over a general feasible set, as long as the objective
is nonnegative, separable, and concave. In fact, our approach is not limited
to optimization problems. It is potentially applicable for approximating
problems in dynamic programming, algorithmic game theory, and other settings
where new solutions methods become available when switching from concave to
piecewise-linear functions.
\subsection{Previous Work}
Piecewise-linear approximations are used in a variety of contexts in science
and engineering, and the literature on them is expansive. Here we limit
ourselves to previous results on approximating a separable concave function
in the context of an optimization problem.

Geoffrion \cite{MR0448879} obtains several general results on approximating
objective functions. One of the settings he considers is the minimization of
a separable concave function over a general feasible set. He derives
conditions under which a piecewise-linear approximation of the objective
achieves the smallest possible absolute error for a given number of pieces.

Thakur \cite{MR491449} considers the maximization of a separable concave
function over a convex set defined by separable constraints. He approximates
both the objective and constraint functions, and bounds the absolute error
when using a given number of pieces in terms of feasible set parameters, the
maximum values of the first and second derivatives of the functions, and
certain dual optimal solutions.

Rosen and Pardalos \cite{MR838476} consider the minimization of a quadratic
concave function over a polyhedron. They reduce the problem to a separable
one, and then approximate the resulting univariate concave functions. The
authors derive a bound on the number of pieces needed to guarantee a given
approximation error in terms of objective function and feasible polyhedron
parameters. They use a nonstandard definition of approximation error,
dividing by a scale factor that is at least the maximum minus the minimum of
the concave function over the feasible polyhedron. Pardalos and Kovoor
\cite{MR1054141} specialize this result to the minimization of a quadratic
concave function over one linear constraint subject to upper and lower
bounds on the variables.

G\"uder and Morris \cite{MR1300824} study the maximization of a separable
quadratic concave function over a polyhedron. They approximate the objective
functions, and bound the number of pieces needed to guarantee a given
absolute error in terms of function parameters and the lengths of the
intervals on which the functions are approximated.

Kontogiorgis \cite{MR1795786} also studies the maximization of a separable
concave function over a polyhedron. He approximates the objective functions,
and uses techniques from numerical analysis to bound the absolute error when
using a given number of pieces in terms of the maximum values of the second
derivatives of the functions and the lengths of the intervals on which the
functions are approximated.

Each of these prior results differs from ours in that, when the goal is to
obtain a $1+\epsilon$ approximation, they do not provide a bound on the
number of pieces that is polynomial in the input size of the original
problem, even if we allow any dependence on $1/\epsilon$.

Meyerson et al. \cite{MR1931859} remark, in the context of the single-sink
concave cost multicommodity flow problem, that a ``tight'' approximation
could be computed. Munagala \cite{munagala-phd} states, in the same context,
that an approximation of arbitrary precision could be obtained with a
polynomial number of pieces. They do not mention specific bounds, or any
details on how to do so.

Hajiaghayi et al. \cite{MR1991628} and Mahdian et al. \cite{MR2247734}
consider the unit demand concave cost facility location problem, and employ
an exact reduction by interpolating the concave functions at points $1, 2,
\dots, m$, where $m$ is the number of customers. The size of the resulting
problem is polynomial in the size of the original problem, but the approach
is limited to problems with unit demand.
\subsection{Our Results}
In Section \ref{sect:gen-feas}, we introduce our piecewise-linear
approximation approach, on the basis of a minimization problem with a
compact feasible set in $\bbR^n_+$ and a nonnegative separable concave
function that is nondecreasing. In this section, we assume that the problem
has an optimal solution $x^* = (x^*_1, \dots, x^*_n)$ with $x^*_i \in \{0\}
\cup [l_i, u_i]$ and $0< l_i\le u_i$. To obtain a $1+\epsilon$
approximation, we need only $1 + \bigl\lceil \log_{1 + 4\epsilon +
  4\epsilon^2} \frac{u_i}{l_i} \bigr\rceil$ pieces for each concave
function. As $\epsilon \to 0$, the number of pieces behaves as
$\frac{1}{4\epsilon}\log \frac{u_i}{l_i}$.

In Section \ref{sect:low-bnd}, we show that any piecewise-linear approach
requires at least $\Omega\bigl(\frac{1}{\sqrt{\epsilon}} \log
\frac{u_i}{l_i}\bigr)$ pieces to approximate a certain function to within
$1+\epsilon$ on $[l_i, u_i]$. Note that for any fixed $\epsilon$, the number
of pieces required by our approach is within a constant factor of this lower
bound. It is an interesting open question to find tighter upper and lower
bounds on the number of pieces as $\epsilon\to 0$. In Section
\ref{sect:gen-feas:ext}, we extend our approximation approach to objective
functions that are not monotone and feasible sets that are not contained in
$\bbR^n_+$.

In Sections \ref{sect:poly-feas} and \ref{sect:poly-feas:ext}, we obtain the
main result of this paper. When the feasible set is a polyhedron and the
cost function is nonnegative separable concave, we can obtain a
$1+\epsilon$ approximation with a number of pieces that is polynomial in the
input size of the feasible polyhedron and linear in $1/\epsilon$. We first
obtain a result for polyhedra in $\mathbb{R}^n_+$ and nondecreasing cost
functions in Section \ref{sect:poly-feas}, and then derive the general
result in Section \ref{sect:poly-feas:ext}.

In Section \ref{sect:algor}, we show how our piecewise-linear approximation
approach can be combined with algorithms for discrete optimization problems
to obtain new algorithms for problems with concave costs. We use a
well-known integer programming formulation that often enables us to write
piecewise-linear problems as discrete optimization problems in a way that
preserves problem structure.

In Section \ref{sect:flow}, we illustrate our method on the concave cost
multicommodity flow problem. We derive considerably smaller bounds on the
number of required pieces than in the general case. Using the formulation
from Section \ref{sect:algor}, the resulting discrete optimization problem
can be written as a fixed charge multicommodity flow problem. This enables
us to devise a new heuristic for concave cost multicommodity flow by
combining our piecewise-linear approximation approach with a dual ascent
method for fixed charge multicommodity flow due to Balakrishnan et al.
\cite{MR90h:90061}.

In Section \ref{sect:flow:comput}, we report on computational experiments.
The new heuristic is able to solve large-scale test problems to within
4.27\% of optimality, on average. The concave cost problems have up to 80
nodes, 1,580 edges, 6,320 commodities, and 9,985,600 variables. These
problems are, to the best of our knowledge, significantly larger than
previously solved concave cost multicommodity flow problems, whether
approximately or exactly. A brief review of the literature on concave cost
flows can be found in Sections \ref{sect:flow} and \ref{sect:flow:comput}.

In Section \ref{sect:loc}, we illustrate our method on the concave cost
facility location problem. Combining a 1.4991-approximation algorithm for
the classical facility location problem due to Byrka \cite{Byrka:2007:OBA}
with our approach, we obtain a $1.4991+\epsilon$ approximation algorithm for
concave cost facility location. Previously, the lowest approximation ratio
for this problem was that of a $3+\epsilon$ approximation algorithm due to
Mahdian and Pal \cite{MR2085471}. In the second author's Ph.D. thesis
\cite{nostd:stratila-phd}, we obtain a number of other algorithms for
concave cost problems, including a 1.61-approximation algorithm for concave
cost facility location. Independently, Romeijn et al. \cite{MR2598822}
developed 1.61 and 1.52-approximation algorithms for this problem. A brief
review of the literature on concave cost facility location can be found in
Section \ref{sect:loc}.
\section{General Feasible Sets}
\label{sect:gen-feas}
We examine the general concave cost minimization problem
\begin{equation}
Z^*_{\ref{mp:conc:gen}} = \min \left \{ \phi(x) : x \in X \right\},
\label{mp:conc:gen}
\end{equation}
defined by a compact feasible set $X \subseteq \bbR^n_+$ and a nondecreasing
separable concave function $\phi:\bbR^n_+\to \bbR_+$. Let $x=(x_1, \dots,
x_n)$ and $\phi(x) = \sum_{i=1}^n \phi_i(x_i)$, and assume that the
functions $\phi_i$ are nonnegative. The feasible set need not be convex or
connected---for example, it could be the feasible set of an integer program.

In this section, we impose the following technical assumption. Let
$[n]=\{1,\dots,n\}$.
\begin{assumption}
\label{assum:gen}
The problem has an optimal solution $x^*=(x^*_1, \dots, x^*_n)$ and bounds
$l_i$ and $u_i$ with $0<l_i\le u_i$ such that $x^*_i\in \{0\} \cup [l_i,
  u_i]$ for $i\in [n]$.
\end{assumption}

Let $\epsilon>0$. To approximate problem \eqref{mp:conc:gen} to within a
factor of $1+\epsilon$, we approximate each function $\phi_i$ by a
piecewise-linear function $\psi_i : \bbR_+ \to \bbR_+$. Each function
$\psi_i$ consists of $P_i+1$ pieces, with $P_i = \bigl \lceil
\log_{1+\epsilon} \frac{u_i}{l_i} \bigr\rceil$, and is defined by the
coefficients
\begin{subequations}
\label{eq:pieces-def}
\begin{align}
s_i^p &= \phi'_i\left(l_i (1+\epsilon)^p\right),
&p\in\{0,\dots,P_i\},\\
f_i^p &= \phi_i\left(l_i (1+\epsilon)^p\right) - l_i (1+\epsilon)^p s_i^p,
&p\in\{0,\dots,P_i\}.
\end{align}
\end{subequations}
If the derivative $\phi'_i\left(l_i (1+\epsilon)^p\right)$ does not exist,
we take the right derivative, that is $s_i^p = \lim_{x_i \to (l_i (1+
  \epsilon)^p)^+} \frac{\phi_i(l_i (1+ \epsilon)^p) - \phi_i(x_i)} {l_i (1+
  \epsilon)^p - x_i}$. The right derivative always exists at points in
$(0,+\infty)$ since $\phi_i$ is concave on $[0, +\infty)$. We proceed in
  this way throughout the paper when the derivative does not exist.

Each coefficient pair $(s^p_i, f^p_i)$ defines a line with nonnegative slope
$s_i^p$ and y-intercept $f_i^p$, which is tangent to the graph of $\phi_i$
at the point $l_i(1+\epsilon)^p$. For $x_i > 0$, the function $\psi_i$ is
defined by the lower envelope of these lines:
\begin{equation}
\label{eq:pwl-def}
\psi_i(x_i) = \min\{ f_i^p + s_i^p x_i : p=0,\dots,P_i \}.
\end{equation}
We let $\psi_i(0) = \phi_i(0)$ and $\psi(x) = \sum_{i=1}^n \psi_i(x_i)$.
Substituting $\psi$ for $\phi$ in problem \eqref{mp:conc:gen}, we obtain the
piecewise-linear cost problem
\begin{equation}
\label{mp:pwl:gen}
Z^*_{\ref{mp:pwl:gen}} = \min \{ \psi(x) : x\in X\}.
\end{equation}

Next, we prove that this problem provides a $1+\epsilon$ approximation for
problem \eqref{mp:conc:gen}. The following proof has an intuitive
geometric interpretation, but does not yield a tight analysis of the
approximation guarantee. A tight analysis will follow.
\begin{lemma}
\label{lm:gen}
$Z^*_{\ref{mp:conc:gen}} \le Z^*_{\ref{mp:pwl:gen}}
\le (1+\epsilon) Z^*_{\ref{mp:conc:gen}}$.
\end{lemma}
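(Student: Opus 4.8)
The plan is to prove the two inequalities separately, using the fact that each $\psi_i$ is the lower envelope of lines tangent to the concave function $\phi_i$.

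For the first inequality $Z^*_{\ref{mp:conc:gen}} \le Z^*_{\ref{mp:pwl:gen}}$, I would first establish the pointwise bound $\psi_i(x_i) \ge \phi_i(x_i)$ for all $x_i \ge 0$. Indeed, each line $f_i^p + s_i^p x_i$ is tangent to the graph of the concave function $\phi_i$, hence lies weakly above the graph everywhere; the minimum over $p$ of these lines therefore still dominates $\phi_i$ (for $x_i > 0$), and at $x_i = 0$ we have equality by definition $\psi_i(0) = \phi_i(0)$. Summing over $i$ gives $\psi(x) \ge \phi(x)$ on $X$, so for any $x \in X$, $\phi(x) \le \psi(x)$, and in particular evaluating at an optimal solution of \eqref{mp:pwl:gen} yields $Z^*_{\ref{mp:conc:gen}} \le \phi(x) \le \psi(x) = Z^*_{\ref{mp:pwl:gen}}$.

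For the second inequality, I would evaluate $\psi$ at the optimal solution $x^*$ of \eqref{mp:conc:gen} guaranteed by Assumption~\ref{assum:gen}, so that $Z^*_{\ref{mp:pwl:gen}} \le \psi(x^*) = \sum_i \psi_i(x^*_i)$. It then suffices to show $\psi_i(x^*_i) \le (1+\epsilon)\,\phi_i(x^*_i)$ for each $i$. If $x^*_i = 0$ this is equality. If $x^*_i \in [l_i, u_i]$, then since $P_i = \lceil \log_{1+\epsilon}(u_i/l_i)\rceil$, there is an index $p \in \{0, \dots, P_i\}$ with $l_i(1+\epsilon)^{p-1} \le x^*_i \le l_i(1+\epsilon)^p$ (taking $p = 0$ if $x^*_i = l_i$); call the tangent point $t = l_i(1+\epsilon)^p$, so that $t/(1+\epsilon) \le x^*_i \le t$. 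Now $\psi_i(x^*_i) \le f_i^p + s_i^p x^*_i$, the value at $x^*_i$ of the tangent line at $t$. The geometric content is that this tangent line, evaluated at a point within a factor $1+\epsilon$ below the tangent point, cannot overshoot $\phi_i$ by more than a factor $1+\epsilon$. To make this precise I would use concavity and nonnegativity of $\phi_i$: the tangent line at $t$ has value $\phi_i(t)$ at $t$ and nonnegative slope $s_i^p = \phi_i'(t) \ge 0$; since $\phi_i$ is concave with $\phi_i(0) \ge 0$, the chord from $(0,\phi_i(0))$ to $(t,\phi_i(t))$ lies below $\phi_i$, which bounds how fast $\phi_i$ can grow and hence controls $\phi_i'(t) \cdot t \le \phi_i(t)$ (the tangent line's $y$-intercept $f_i^p \ge 0$). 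Combining, for $x_i \in [t/(1+\epsilon), t]$ we get $f_i^p + s_i^p x_i \le f_i^p + s_i^p t = \phi_i(t)$, while $\phi_i(x_i) \ge \frac{x_i}{t}\phi_i(t) \ge \frac{1}{1+\epsilon}\phi_i(t)$ by concavity and $\phi_i(0) \ge 0$ again (the chord from the origin). Hence $\psi_i(x^*_i) \le f_i^p + s_i^p x^*_i \le \phi_i(t) \le (1+\epsilon)\phi_i(x^*_i)$. Summing over $i$ gives $Z^*_{\ref{mp:pwl:gen}} \le \psi(x^*) \le (1+\epsilon)\sum_i \phi_i(x^*_i) = (1+\epsilon) Z^*_{\ref{mp:conc:gen}}$.

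The main obstacle I anticipate is the bookkeeping in the second inequality when $x^*_i$ sits near the tangent point: one must correctly pick the index $p$ so that $x^*_i$ lies in the interval $[t/(1+\epsilon), t]$ and handle the boundary case $x^*_i = l_i$ (and the endpoint $p = P_i$ covering $u_i$), and one must use concavity together with $\phi_i(0) \ge 0$ in two places — once to bound the tangent-line value above by $\phi_i(t)$ and once to bound $\phi_i(x^*_i)$ below by $\phi_i(t)/(1+\epsilon)$. The authors already flag that this argument, while geometrically transparent, is not tight; the factor $1+\epsilon$ here comes from a crude ``evaluate the tangent line no lower than its value at the tangent point'' step, which is exactly where a sharper analysis (the promised tight bound) would later improve things.
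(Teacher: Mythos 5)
Your proof is correct and follows essentially the same route as the paper's: tangency from above gives the first inequality, and evaluating $\psi$ at the concave optimum with a per-coordinate bound over the geometric grid of tangent points gives the second. The only difference is cosmetic: you round $x^*_i$ up to the nearest tangent point $t$ and use the chord from the origin to get $\phi_i(x^*_i)\ge \phi_i(t)/(1+\epsilon)$, whereas the paper rounds down and uses $f_i^p\ge 0$ together with monotonicity of $\phi_i$; both extract the same factor of $1+\epsilon$.
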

\begin{proof}
Let $x^*=(x^*_1, \dots, x^*_n)$ be an optimal solution to problem
\eqref{mp:pwl:gen}; an optimal solution exists since $\psi(x)$ is concave
and $X$ is compact. Fix an $i\in [n]$, and note that for each $p\in \{0,
\dots, P_i\}$, the line $f_i^p + s^p_i x_i$ is tangent from above to the
graph of $\phi_i(x_i)$. Hence $\phi_i(x^*_i) \le \min \{ f_i^p + s_i^p x^*_i
: p = 0, \dots, P_i \} = \psi_i(x^*_i)$. Therefore, $Z^*_{\ref{mp:conc:gen}}
\le \phi(x^*) \le \psi(x^*) = Z^*_{\ref{mp:pwl:gen}}$.

Conversely, let $x^*$ be an optimal solution of problem
\eqref{mp:conc:gen} that satisfies Assumption \ref{assum:gen}. It
suffices to show that $\psi_i(x^*_i) \le (1+\epsilon) \phi_i(x^*_i)$ for
$i\in [n]$. If $x^*_i = 0$, then the inequality holds. Otherwise, let $p =
\bigl\lfloor \log_{1+\epsilon} \frac{x^*_i}{l_i} \bigr\rfloor$, and note
that $p\in \{0,\dots,P_i\}$ and $\frac{x^*_i}{l_i} \in [(1+\epsilon)^p,
  (1+\epsilon)^{p+1})$. Because $\phi_i$ is concave, nonnegative, and
  nondecreasing,
\begin{equation}
\begin{split}
\psi_i(x^*_i) & \le f_i^p + s_i^p x^*_i 
\le f_i^p + s_i^p l_i (1+\epsilon)^{p+1} \\
& = f_i^p + s_i^p l_i (1+\epsilon) (1+\epsilon)^p
\le (1+\epsilon) \left(f_i^p + s_i^p l_i (1+\epsilon)^p \right) \\
& = (1+\epsilon) \phi_i\left( l_i (1+\epsilon)^p\right) 
\le (1+\epsilon) \phi_i(x^*_i).
\end{split}
\end{equation}
(See Figure \ref{fig:intuit} for an illustration.) Therefore,
$Z^*_{\ref{mp:pwl:gen}} \le \psi(x^*) \le (1+\epsilon) \phi(x^*) =
(1+\epsilon) Z^*_{\ref{mp:conc:gen}}$.
\end{proof}
\begin{figure}[t]
\begin{center}
\epsfig{file=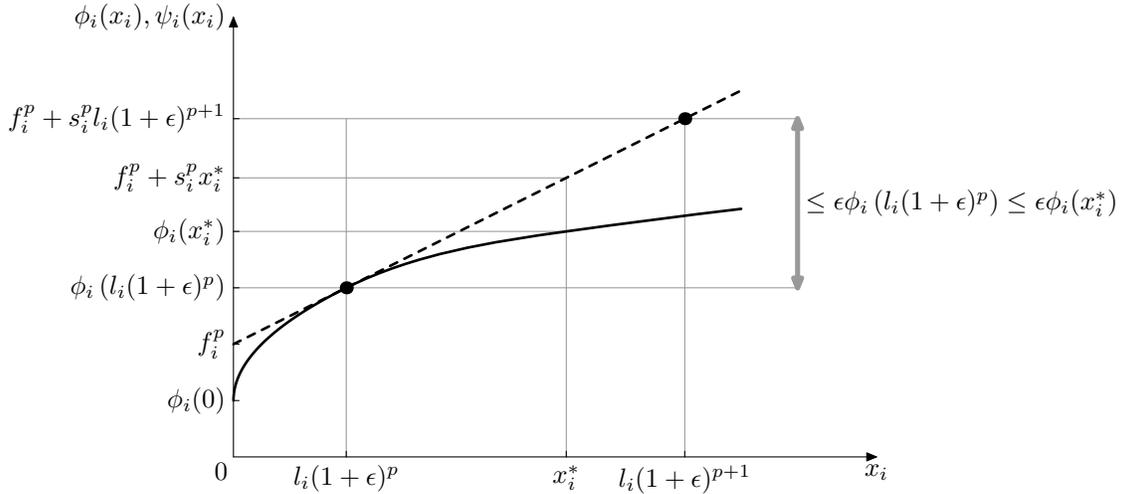}
\end{center}
\caption{Illustration of the proof of Lemma \ref{lm:gen}. Observe that the
  height of any point inside the box with the bold lower left and upper
  right corners exceeds the height of the box's lower left corner by at most
  a factor of $\epsilon$.
\label{fig:intuit}}
\end{figure}
We now present a tight analysis.

\begin{theorem}
$Z^*_{\ref{mp:conc:gen}} \le Z^*_{\ref{mp:pwl:gen}} \le
  \frac{1+\sqrt{\epsilon+1}}{2} Z^*_{\ref{mp:conc:gen}}$. The
  approximation guarantee of $\frac{1+\sqrt{\epsilon+1}}{2}$ is tight.
\label{th:gen-tight}
\end{theorem}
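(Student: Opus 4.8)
The lower bound $Z^*_{\ref{mp:conc:gen}}\le Z^*_{\ref{mp:pwl:gen}}$ holds exactly as in Lemma~\ref{lm:gen}: each line $f_i^p+s_i^px_i$ is tangent from above to $\phi_i$, so $\phi\le\psi$ on $\bbR^n_+$, and evaluating at a minimizer of $\psi$ over $X$ gives the inequality. For the upper bound, write $K=\tfrac{1+\sqrt{\epsilon+1}}{2}$ and record the identity $4K(K-1)=\epsilon$ (equivalently $(2K-1)^2=\epsilon+1$). Take an optimal $x^*$ for \eqref{mp:conc:gen} satisfying Assumption~\ref{assum:gen}. Since $Z^*_{\ref{mp:pwl:gen}}\le\psi(x^*)$, it suffices to prove $\psi_i(x^*_i)\le K\,\phi_i(x^*_i)$ for each $i$. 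This is immediate when $x^*_i=0$ and when $x^*_i$ equals a breakpoint $l_i(1+\epsilon)^p$ (there $\psi_i(x^*_i)=\phi_i(x^*_i)$), so assume $x^*_i$ lies strictly between two consecutive breakpoints $a=l_i(1+\epsilon)^p$ and $b=(1+\epsilon)a$; one checks that $p$ and $p+1$ both index pieces of $\psi_i$.

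The plan for the per-coordinate inequality is to trap the two quantities between the two relevant tangent lines and the chord of $\phi_i$ over $[a,b]$. Write $t=x^*_i=(1-v)a+vb$ with $v\in(0,1)$. From \eqref{eq:pwl-def}--\eqref{eq:pieces-def} and $s_i^{p+1}\ge0$ we get $\psi_i(t)\le\min\{\phi_i(a)+s_i^p(t-a),\ \phi_i(b)\}$, and concavity of $\phi_i$ gives $\phi_i(t)\ge(1-v)\phi_i(a)+v\phi_i(b)$. Two structural facts locate the worst case: since $\phi_i$ is concave and $\phi_i(0)\ge0$, its tangent at $a$ lies above $\phi_i(0)$, forcing $s_i^p\le\phi_i(a)/a$; and that same tangent lies above $\phi_i(b)$, so $\phi_i(b)\le\phi_i(a)+s_i^p(b-a)$. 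Dividing by $\phi_i(a)$ (the case $\phi_i(a)=0$ being trivial, since then $s_i^p=0$) and setting $\tau=s_i^p(b-a)/\phi_i(a)\in[0,\epsilon]$, $\rho=\phi_i(b)/\phi_i(a)\in[1,1+\tau]$, the claim reduces to
\[
\min\{\,1+\tau v,\ \rho\,\}\;\le\;K\bigl(1+(\rho-1)v\bigr)\qquad\text{for all }v\in(0,1),\ 0\le\tau\le\epsilon,\ 1\le\rho\le1+\tau .
\]

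I would dispatch this last inequality by a two-way case split. If the minimum equals $\rho$, then $\rho\le1+\tau v\le1+\epsilon v$ and the inequality reduces (after using $\rho\le1+\epsilon v$ and rearranging) to $\epsilon v(1-Kv)\le K-1$; if the minimum equals $1+\tau v$, then $(\rho-1)v\ge\tau v^2$ and the inequality reduces to $\tau v(1-Kv)\le K-1$, hence again to $\epsilon v(1-Kv)\le K-1$. Now $\epsilon v(1-Kv)$ is maximized at $v=\tfrac1{2K}$ with value $\tfrac{\epsilon}{4K}$, which equals $K-1$ by the identity $4K(K-1)=\epsilon$; so both inequalities hold, with equality exactly in the critical case. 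Summing $\psi_i(x^*_i)\le K\phi_i(x^*_i)$ over $i$ yields $Z^*_{\ref{mp:pwl:gen}}\le\psi(x^*)\le K\,Z^*_{\ref{mp:conc:gen}}$. I expect this verification---a short but genuine calculation---to be the main obstacle, and it is precisely where the constant $K$ is forced.

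For tightness I would exhibit one-dimensional instances whose ratio tends to $K$. Take $n=1$, $l_1=1$, $u_1=1+\epsilon$ (so $\psi_1$ has exactly the two pieces tangent at $1$ and at $1+\epsilon$), and $X=\{\sqrt{\epsilon+1}\}$; for small $\delta>0$ let $\phi_1^\delta$ be the concave nondecreasing piecewise-linear function equal to $s\mapsto s$ on $[0,1+\delta]$, linear with slope $m_\delta=\tfrac{\sqrt{\epsilon+1}-(1+\delta)}{\epsilon-\delta}$ on $[1+\delta,1+\epsilon]$ (here $0\le m_\delta\le1$ for small $\delta$, since $\sqrt{\epsilon+1}\le1+\epsilon$), and constant equal to $\sqrt{\epsilon+1}$ for $s\ge1+\epsilon$. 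Then $s_1^0=1$, $s_1^1=0$, so $\psi_1(s)=\min\{s,\sqrt{\epsilon+1}\}$ and $Z^*_{\ref{mp:pwl:gen}}=\sqrt{\epsilon+1}$, while $Z^*_{\ref{mp:conc:gen}}=\phi_1^\delta(\sqrt{\epsilon+1})\to1+\tfrac{(\sqrt{\epsilon+1}-1)^2}{\epsilon}=\tfrac{2\sqrt{\epsilon+1}\,(\sqrt{\epsilon+1}-1)}{\epsilon}$ as $\delta\to0^+$. Hence $Z^*_{\ref{mp:pwl:gen}}/Z^*_{\ref{mp:conc:gen}}\to\tfrac{\epsilon}{2(\sqrt{\epsilon+1}-1)}=\tfrac{1+\sqrt{\epsilon+1}}{2}=K$. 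The $\delta=0$ limit is degenerate---the kink reaches $l_1$ and lowers the right derivative there---so $K$ is approached but not attained, which is exactly what ``tight'' asserts.
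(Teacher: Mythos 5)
Your proof is correct, and at its core it performs the same analysis as the paper: both arguments localize to a single coordinate and a single geometric subinterval $[a,(1+\epsilon)a]$, reduce the worst case to two parameters plus the location of the evaluation point, and hit the same critical point ($v=\tfrac{1}{2K}$, which is the paper's $\xi=\epsilon b$ with $b=\tfrac{1}{1+\sqrt{\epsilon+1}}$); your tightness family is also essentially the paper's extremal three-slope function with the kink perturbed off the left breakpoint. The difference is one of execution rather than idea. The paper \emph{derives} the constant by replacing $\phi_i$ with an extremal three-piece concave function $\varphi_i$ through a chain of ``we can assume'' reductions (set $c=0$, equalize the two tangent values, maximize over $b$), whereas you \emph{fix} $K=\tfrac{1+\sqrt{\epsilon+1}}{2}$ up front, sandwich $\psi_i(t)\le\min\{\phi_i(a)+s_i^p(t-a),\phi_i(b)\}$ and $\phi_i(t)\ge(1-v)\phi_i(a)+v\phi_i(b)$, and verify the resulting inequality $\min\{1+\tau v,\rho\}\le K(1+(\rho-1)v)$ over the full admissible region $0\le\tau\le\epsilon$, $1\le\rho\le1+\tau$, using the identity $4K(K-1)=\epsilon$. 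Your normalization step (deriving $\tau\le\epsilon$ from $s_i^p\le\phi_i(a)/a$, which is exactly where nonnegativity of $\phi_i$ enters) makes explicit what the paper's reduction to $\varphi_i$ leaves implicit, and the resulting argument is airtight where the paper's is somewhat informal; what it gives up is the paper's derivation of \emph{why} this particular constant emerges, since you must already know $K$ to run the verification. Both tightness constructions agree, including the observation that the bound is approached but not attained because the right-derivative convention degenerates when the kink sits exactly at $l_i$.
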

\begin{proof}
We have shown that $Z^*_{\ref{mp:conc:gen}} \le Z^*_{\ref{mp:pwl:gen}}$ in
Lemma \ref{lm:gen}. Fix an $i\in [n]$, and consider the approximation ratio
achieved on $[l_i, u_i]$ when approximating $\phi_i$ by $\psi_i$. If
$\phi_i(l_i) = 0$, then $\phi_i(x_i) = 0$ for all $x_i \ge 0$, and we have a
trivial case. If $\phi_i(l_i) > 0$, then $\phi_i(x_i) > 0$ for all $x_i >
0$, and the ratio is
\begin{multline}
\label{eq:approx-ratio}
\quad\qquad \min \{ 1+ \gamma : \psi_i(x_i) \le 
(1+\gamma) \phi_i( x_i) \text{ for } x_i \in [l_i, u_i] \} \\
= \max\{ \psi_i(x_i) / \phi_i(x_i) : x_i\in [l_i, u_i] \}. \qquad\quad
\end{multline}
We derive an upper bound of $\frac{1+\sqrt{\epsilon+1}}{2}$ on this ratio,
and then construct a family of functions that, when taken as $\phi_i$, yield
ratios converging to this upper bound.

Without loss of generality, we assume $l_i=1$ and $u_i =
(1+\epsilon)^{P_i}$. The approximation ratio achieved on $[1, u_i]$ is the
highest of the approximation ratios on each of the intervals $[1,
  1+\epsilon], \dots, [(1+\epsilon)^{P_i-1}, (1+\epsilon)^{P_i}]$. By
scaling along the x-axis, it is enough to consider only the interval
$[1,1+\epsilon]$, and therefore we can assume that $\psi_i$ is given by the
two tangents to the graph of $\phi_i$ at $1$ and $1+\epsilon$. Suppose these
tangents have slopes $a$ and $c$ respectively. We can assume that
$\phi_i(0)=0$, and that $\phi_i$ is linear with slope $a$ on $[0,1]$ and
linear with slope $c$ on $[1+\epsilon, +\infty)$. By scaling along the
  y-axis, we can assume that $a=1$.

We upper bound the approximation ratio between $\psi_i$ and $\phi_i$ by the
ratio between $\psi_i$ and a new function $\varphi_i$ that has
$\varphi_i(0)=0$ and consists of 3 linear pieces with slopes $1 \ge b \ge c$
on $[0,1]$, $[1, 1+\epsilon]$, and $[1+\epsilon, +\infty)$ respectively. The
approximation ratio between $\psi_i$ and $\varphi_i$ can be viewed as a
function of $b$ and $c$. Let $1+\xi$ be a point on the interval
$[1,1+\epsilon]$. We are interested in the following maximization problem
with respect to $b$, $c$, and $\xi$:
\begin{equation}
\max \{ \psi_i(1+\xi) / \varphi_i(1+\xi) : 1\ge b\ge c\ge 0, 0\le \xi\le
\epsilon \}.
\end{equation}
Since $\varphi_i$ consists of 3 linear pieces, while $\psi_i$ is given by
the lower envelope of two tangents, we have
\begin{equation}
\varphi_i(1+\xi)=1+b\xi, \qquad \psi_i(x_i)=\min\left\{ 1+\xi, 1+b\epsilon -
c(\epsilon-\xi)\right\}.
\end{equation}
(See Figure \ref{fig:tight} for an illustration.)

Since $\xi \le \epsilon$, we can assume that $c=0$. Next, since we seek to
find $b$ and $\xi$ that maximize
\begin{equation}
\frac{\psi_i(1+\xi)}{\varphi_i(1+\xi)} =
\frac{\min\{1+\xi,1+b\epsilon\}}{1+b\xi} = \min\left\{ \frac{1+\xi}{1+b\xi},
\frac{1+b\epsilon}{1+b\xi}\right\},
\end{equation}
we can assume $\xi$ is such that $\frac{1+\xi}{1+b\xi} =
\frac{1+b\epsilon}{1+b\xi}$, which yields $\xi = \epsilon b$. Substituting,
we now seek to maximize $\frac{1+\epsilon b}{1+\epsilon b^2}$, and we find
that the maximum is achieved at $b = \frac{1}{1+\sqrt{\epsilon+1}}$ and
equals $\frac{1+\sqrt{\epsilon+1}}{2}$. Therefore,
$\frac{1+\sqrt{\epsilon+1}}{2}$ is an approximation guarantee for our
approach.

Finally, we show that this guarantee is tight. First, let $\phi_i$ be the
new function $\varphi_i$ with $b$ and $c$ taken to have the values that
yield the guarantee above. If we were to use our approach to approximate
$\phi_i$, the tangents at $1$ and $1+\epsilon$ would have slopes $b$ and
$c$, instead of the desired $1$ and $c$, since $\phi_i$ lacks derivatives at
$1$ and $1+\epsilon$, and our approach uses the derivative from the right
when the derivative does not exist. We can cause our approach to produce
tangents at $1$ and $1+\epsilon$ with slopes $1$ and $c$ by taking a
sufficiently small $\zeta$ and letting $\phi_i$ have its first breakpoint at
$1+\zeta$ instead of $1$. When $\zeta\to 0$, the approximation ratio
achieved by our approach for $\phi_i$ converges to the guarantee above.
\end{proof}
\begin{figure}[t]
\begin{center}
\epsfig{file=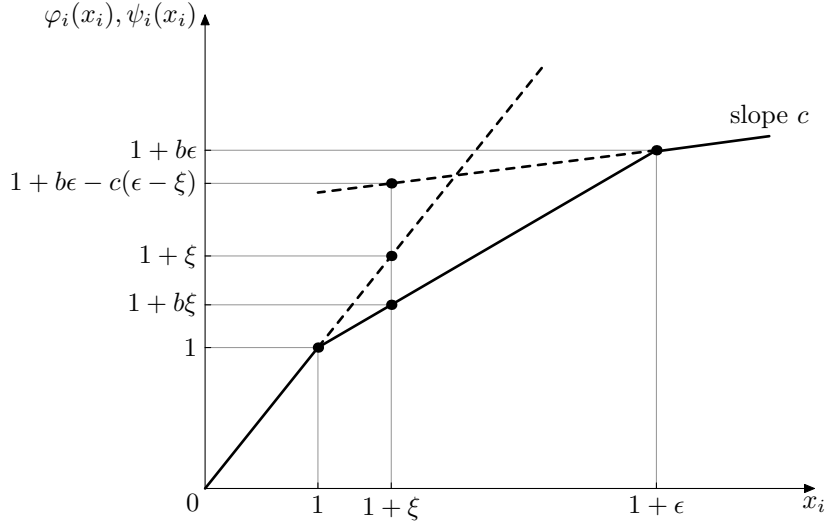}
\end{center}
\caption{Illustration of the proof of Theorem \ref{th:gen-tight}.
\label{fig:tight}}
\end{figure}
To compare the tight approximation guarantee with that provided by Lemma
\ref{lm:gen}, note that $\frac{1+\sqrt{\epsilon+1}}{2} \le
1+\frac{\epsilon}{4}$ for $\epsilon > 0$. Moreover, since
$\frac{1+\sqrt{\epsilon+1}}{2} \to 1$ and
$\frac{d}{d\epsilon}\frac{1+\sqrt{\epsilon+1}}{2} \to \frac{1}{4}$ as
$\epsilon\to 0$, it follows that $1+\frac{\epsilon}{4}$ is the lowest ratio
of the form $1+\frac{\epsilon}{k}$ that is guaranteed by our approach.

Equivalently, instead of an approximation guarantee of
$\frac{1+\sqrt{\epsilon+1}}{2}$ using $1 + \bigl\lceil \log_{1+\epsilon}
\frac{u_i}{l_i} \bigr\rceil$ pieces, we can obtain a guarantee of
$1+\epsilon$ using only $1 + \bigl\lceil \log_{1 + 4\epsilon + 4\epsilon^2}
\frac{u_i}{l_i} \bigr\rceil$ pieces. Note that $\log_{1 + 4\epsilon +
  4\epsilon^2} \frac{u_i}{l_i} = \frac{1}{\log (1+4\epsilon + 4\epsilon^2)}
\log \frac{u_i}{l_i}$, and as $\epsilon \to 0$, we have $\frac{1}{\log(1 +
  4\epsilon + 4\epsilon^2)} \to +\infty$ and
$\frac{4\epsilon}{\log(1+4\epsilon+4\epsilon^2)} \to 1$. Therefore, as
$\epsilon\to 0$, the number of pieces behaves as $\frac{1}{4\epsilon} \log
\frac{u_i}{l_i}$.

This bound on the number of pieces enables us to apply our approximation
approach to practical concave cost problems. In Section
\ref{sect:poly-feas}, we will exploit the logarithmic dependence on
$\frac{u_i}{l_i}$ of this bound to derive polynomial bounds on the number of
pieces for problems with polyhedral feasible sets.

\subsection{A Lower Bound on the Number of Pieces}
\label{sect:low-bnd}
Since the approximation guarantee of Theorem \ref{th:gen-tight} is tight, $1
+ \bigl\lceil \log_{1 + 4\epsilon + 4\epsilon^2} \frac{u_i}{l_i}
\bigr\rceil$ is a lower bound on the number of pieces needed to guarantee a
$1+\epsilon$ approximation when using the approach of equations
\eqref{eq:pieces-def}--\eqref{mp:pwl:gen}. In this section, we establish a
lower bound on the number of pieces needed to guarantee a $1+\epsilon$
approximation when using \emph{any} piecewise-linear approximation approach.

First, we show that by limiting ourselves to approaches that use
piecewise-linear functions whose pieces are tangent to the graph of the
original function, we increase the number of needed pieces by at most a
factor of 3.

Let $\phi_i:\bbR_+\to\bbR_+$ be a nondecreasing concave function, which we
are interested in approximating on an interval $[l_i, u_i]$ with $0<l_i\le
u_i$. Assume that $\phi_i(x_i) > 0$ for all $x_i > 0$; if $\phi_i(x_i) = 0$
for some $x_i > 0$, then $\phi_i$ must be zero everywhere on $[0, +\infty)$,
  and we have a trivial case. Also let $\psi_i:\bbR_+ \to \bbR_+$ be a
  piecewise-linear function with $Q$ pieces that approximates $\phi_i$ on
  $[l_i, u_i]$ to within a factor of $1+\epsilon$, that is $\frac{1}{1 +
    \epsilon} \le \frac{\psi_i(x_i)}{\phi_i(x_i)} \le 1 + \epsilon$ for
  $x_i\in [l_i,u_i]$. We are not imposing any other assumptions on $\psi_i$;
  in particular it need not be continuous, and its pieces need not be
  tangent to the graph of $\phi_i$.
\begin{lemma}
The function $\phi_i$ can be approximated on $[l_i, u_i]$ to within a factor
of $1+\epsilon$ by a piecewise-linear function $\varphi_i:\bbR_+ \to \bbR_+$
that has at most $3Q$ pieces and whose every piece is tangent to the graph
of $\phi_i$.
\label{lm:low-bnd:reduct}
\end{lemma}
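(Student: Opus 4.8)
The plan is to take the given $1+\epsilon$ approximation $\psi_i$ with $Q$ pieces and convert it, piece by piece, into a piecewise-linear function $\varphi_i$ all of whose linear pieces are tangent to the graph of $\phi_i$, at the cost of at most tripling the number of pieces. The key geometric fact I would exploit is that $\phi_i$ is concave and nonnegative on $[0,+\infty)$, so every tangent line (taking right derivatives where necessary, as the paper does throughout) lies weakly above the graph of $\phi_i$, and the lower envelope of a family of such tangents is itself a concave piecewise-linear function that dominates $\phi_i$. Thus producing an \emph{over}-approximation made of tangents is easy; the work is in ensuring it still stays below $(1+\epsilon)\phi_i$, i.e.\ that replacing the original (possibly non-tangent, possibly discontinuous) pieces by tangents does not blow up the ratio by more than a controlled amount, and then accounting for the number of pieces.

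First I would set up the reduction on a single linear piece of $\psi_i$. Suppose $\psi_i$ restricted to a subinterval $[\alpha,\beta]\subseteq[l_i,u_i]$ is a single affine function $L(x_i)$ satisfying $\frac{1}{1+\epsilon}\le \frac{L(x_i)}{\phi_i(x_i)}\le 1+\epsilon$ there. I want to replace $L$ on $[\alpha,\beta]$ by a small number of tangent lines to $\phi_i$ whose lower envelope still lies in the band $[\tfrac{1}{1+\epsilon}\phi_i,\,(1+\epsilon)\phi_i]$. The natural choice is to use the two tangents to $\phi_i$ at the endpoints $\alpha$ and $\beta$ (and possibly one more to handle the interior), mimicking the two-tangent construction already analyzed in Lemma~\ref{lm:gen} and Theorem~\ref{th:gen-tight}. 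Because those tangents lie above $\phi_i$, the lower envelope is automatically $\ge \phi_i \ge \frac{1}{1+\epsilon}\phi_i$, so the lower bound side of the band is free. For the upper bound side, I would argue that on $[\alpha,\beta]$ the lower envelope of the two endpoint tangents never exceeds the affine interpolant of $\phi_i$ through $(\alpha,\phi_i(\alpha))$ and $(\beta,\phi_i(\beta))$ — in fact it is $\le \min$ of the two tangent values — and combine this with the hypothesis $L(\alpha)\le(1+\epsilon)\phi_i(\alpha)$, $L(\beta)\le(1+\epsilon)\phi_i(\beta)$ plus concavity/monotonicity of $\phi_i$ to show the envelope stays within $(1+\epsilon)\phi_i$ on all of $[\alpha,\beta]$. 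The only place this can fail with just two tangents is near the ``kink'' where the two tangents cross, which is exactly the situation quantified in Theorem~\ref{th:gen-tight}; if that single kink is not enough, inserting one additional tangent (at the point where the envelope of the two endpoint tangents is largest relative to $\phi_i$) repairs it, giving at most $3$ tangent pieces per original piece, hence $\le 3Q$ overall. Finally I would glue the per-piece constructions together: since all pieces of the resulting $\varphi_i$ are tangents to a common concave function, taking the lower envelope of the whole collection only decreases it, preserving the lower bound, and one checks the upper bound is maintained piecewise; $\varphi_i$ maps $\bbR_+\to\bbR_+$ because $\phi_i\ge 0$ forces each tangent's value at relevant points to be nonnegative.

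The main obstacle I anticipate is the upper-bound (the $\le(1+\epsilon)\phi_i$ side) after the replacement: the original $\psi_i$ is allowed to be discontinuous and to have pieces not tangent to $\phi_i$, so a single original piece could be, say, a nearly-horizontal secant that dips well below $\phi_i$ at one end and rides near $(1+\epsilon)\phi_i$ at the other, and one must verify that the two endpoint tangents plus at most one interior tangent genuinely stay under $(1+\epsilon)\phi_i$ — this is where the quantitative content of Theorem~\ref{th:gen-tight} (the $\tfrac{1+\sqrt{\epsilon+1}}{2}\le 1+\tfrac{\epsilon}{4}$ slack between a two-tangent envelope and the function) gets reused to show the extra error introduced by switching to tangents is absorbed. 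I would also need a small amount of care at the boundary pieces that straddle $l_i$ or $u_i$, and in handling the degenerate sub-case where $\phi_i$ is linear on part of $[l_i,u_i]$ (then one tangent suffices there). The counting ``$\le 3Q$'' is then immediate, and the tangency of every piece holds by construction.
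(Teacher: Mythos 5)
Your overall plan (convert each of the $Q$ pieces into at most $3$ tangent pieces) matches the paper's, but the specific construction you choose --- tangents at the two \emph{endpoints} of each piece's interval plus possibly one interior tangent --- does not work as argued, and the argument contains a claim that is actually reversed. For a concave $\phi_i$, the tangent at $\alpha$ has slope at least the secant slope over $[\alpha,\beta]$ and agrees with the secant at $\alpha$, so it lies \emph{above} the affine interpolant on $[\alpha,\beta]$; the same holds for the tangent at $\beta$. Hence the lower envelope of the two endpoint tangents is $\ge$ the secant, not $\le$ it, and your route to the upper bound collapses. This is not a repairable slip in sign: for $\phi_i(x_i)=\sqrt{x_i}$, a single line can $(1+\epsilon)$-approximate $\phi_i$ on an interval of ratio roughly $1+8\sqrt{\epsilon}$ (it dips below by $1+\epsilon$ in the middle and rises above by $1+\epsilon$ at the ends), and on such an interval the two endpoint tangents cross at the geometric midpoint with envelope value about $(1+2\epsilon)\phi_i$ there. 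So the hypothesis on $L$ genuinely does not force the endpoint-tangent envelope into the $(1+\epsilon)$ band, and invoking Theorem \ref{th:gen-tight} does not help: that theorem gives a guarantee of $\frac{1+\sqrt{1+\epsilon'}}{2}\approx 1+\epsilon'/4$ for tangents spaced by a factor $1+\epsilon'$, and here the endpoints can be spaced by $\epsilon'=\Theta(\sqrt{\epsilon})$, yielding only a $1+\Theta(\sqrt{\epsilon})$ guarantee. Your fallback --- ``insert one additional interior tangent'' --- happens to close the gap in the $\sqrt{x_i}$ example, but you give no argument that it does so in general, and that is where the entire difficulty of the lemma lives.

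The paper avoids all of this by choosing the tangency points differently. After translating each piece so that it touches the graph, a non-tangent piece crosses $\phi_i$ at one or two points $\xi_1,\xi_2$; on the sub-intervals where the line is above $\phi_i$ one takes the tangents at $\xi_1$ and $\xi_2$ (each such tangent agrees with the line at the crossing point and has slope between the line's slope and the local behavior of $\phi_i$, so it is sandwiched between $\phi_i$ and the line, hence inside the band for free); on the middle sub-interval where the line is \emph{below} $\phi_i$, one scales the line up by a factor $1+\epsilon^*\le 1+\epsilon$ until it touches $\phi_i$ from above and takes the tangent at the touching point. Your proposal has no analogue of this scaling step, which is precisely what handles the region where the original piece runs under the graph. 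If you want to salvage your endpoint-based construction you would need a new quantitative lemma bounding the envelope of endpoint tangents plus one interior tangent in terms of the reach of a single approximating line; as written, the proof has a genuine gap.
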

\begin{proof}
First, we translate the pieces of $\psi_i$ that are strictly above $\phi_i$
down, and the pieces strictly below up, until they intersect $\phi_i$. Let
the modified function be $\psi'_i$; clearly $\psi'_i$ still provides a
$1+\epsilon$ approximation for $\phi_i$ on $[l_i, u_i]$.

For each piece of $\psi'_i$, we proceed as follows. Let $f_i^p + s_i^p x_i$
be the line defining the piece, and $[a_p, b_p]$ be the interval covered by
the piece on the x-axis. Without loss of generality, assume that $[a_p, b_p]
\subseteq [l_i, u_i]$. If the piece is tangent to $\phi_i$, we take it as
one of the pieces composing $\varphi_i$, ensuring that $\varphi_i$ provides
a $1+\epsilon$ approximation for $\phi_i$ on $[a_p, b_p]$.

If the piece is not tangent, it must intersect $\phi_i$ at either one or two
points. If the piece intersects at two points $\xi_1$ and $\xi_2$, then the
points partition the interval $[a_p, b_p]$ into three subintervals: $[a_p,
  \xi_1]$, on which the piece is above $\phi_i$; $[\xi_1, \xi_2]$, on which
the piece is below $\phi_i$; and $[\xi_2, b_p]$, on which the piece is again
above $\phi_i$. If there is one intersection point, we can partition $[a_p,
  b_p]$ similarly, except that one or two of the subintervals would be
empty.

On the interval $[a_p, \xi_1]$, the line $f_i^p + s_i^p x_i$ is above
$\phi_i$ and provides a $1+\epsilon$ approximation for $\phi_i$. We take the
tangent to $\phi_i$ at $\xi_1$ as one of the pieces composing $\varphi_i$,
ensuring that $\varphi_i$ provides a $1+\epsilon$ approximation for $\phi_i$
on $[a_p, \xi_1]$. Similarly, we take the tangent to $\phi_i$ at $\xi_2$ as
one of the pieces, ensuring a $1+\epsilon$ approximation on $[\xi_2, b_p]$.

Next, note that on the interval $[\xi_1, \xi_2]$, the line $f_i^p +s_i^p
x_i$ is below $\phi_i$ and provides a $1+\epsilon$ approximation for
$\phi_i$. Therefore, on $[\xi_1, \xi_2]$, the scaled line $(1+\epsilon)f_i^p
+ (1+\epsilon)s_i^p x_i$ is above $\phi_i$ and still provides a $1+\epsilon$
approximation for $\phi_i$. Since the original line is below and the scaled
line above $\phi_i$, there is an $\epsilon^*$ with $0 < \epsilon^* \le
\epsilon$ such that, on $[\xi_1, \xi_2]$, the line $(1+\epsilon^*)f_i^p +
(1+\epsilon^*)s_i^p x_i$ is above $\phi_i$ and intersects it at one or more
points. If this line is tangent, we take it as one of the pieces that define
$\varphi_i$. If the line is not tangent, it must intersect $\phi_i$ at
exactly one point $\xi'$, and we take the tangent to $\phi_i$ at $\xi'$ as
one of the pieces. In either case, we have ensured that $\varphi_i$ provides
a $1+\epsilon$ approximation for $\phi_i$ on $[\xi_1, \xi_2]$.

Since $\cup_{p=1}^Q [a_p, b_p] = [l_i, u_i]$, the constructed function
$\varphi_i$ provides a $1+\epsilon$ approximation for $\phi_i$ on $[l_i,
  u_i]$. Since for each piece of $\psi_i$, we introduced at most 3 pieces,
$\varphi_i$ has at most $3Q$ pieces.
\end{proof}
Next, we establish a lower bound on the number of pieces needed to
approximate the square root function to within a factor of $1+\epsilon$ by a
piecewise-linear function that has its every piece tangent to the graph of
the original function. Let $\phi_i(x_i)=\sqrt{x_i}$, and let $\varphi_i$ be
a piecewise-linear function that approximates $\phi_i$ to within a factor of
$1+\epsilon$ on $[l_i, u_i]$ and whose every piece is tangent to the graph
of $\phi_i$.

To write the lower bounds in this section in a more intuitive way, we define
the function $\gamma(\epsilon) = \bigl(1 + 2\epsilon(2 + \epsilon) +
2(1+\epsilon) \sqrt{\epsilon(2 + \epsilon)}\bigr)^2$. As $\epsilon \to 0$,
$\gamma(\epsilon)$ behaves as $1+\sqrt{32\epsilon}$, with the other terms
vanishing because they contain higher powers of $\epsilon$. In particular,
$1+\sqrt{32\epsilon} \le \gamma(\epsilon) \le 1+16\sqrt{\epsilon}$ for $0 <
\epsilon \le \frac{1}{10}$.
\begin{lemma}
The function $\varphi_i$ must contain at least $\bigl\lceil
\log_{\gamma(\epsilon)} \frac{u_i}{l_i} \bigr\rceil$ pieces. As $\epsilon\to
0$, this lower bound behaves as $\frac{1}{\sqrt{32\epsilon}} \log
\frac{u_i}{l_i}$.
\label{lm:low-bnd:tan}
\end{lemma}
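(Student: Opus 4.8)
The plan is to exploit the concavity of $\phi_i(x_i)=\sqrt{x_i}$: every tangent line to its graph lies weakly above the graph, so any $\varphi_i$ assembled from tangent pieces satisfies $\varphi_i(x_i)\ge\phi_i(x_i)$ wherever it is defined. Consequently the two-sided requirement $\frac{1}{1+\epsilon}\le\psi_i/\phi_i\le 1+\epsilon$ collapses for such a $\varphi_i$ to the single inequality $\varphi_i(x_i)\le(1+\epsilon)\sqrt{x_i}$ on $[l_i,u_i]$, and the whole lemma reduces to one univariate estimate: how wide, in the multiplicative sense, can the domain of a single tangent piece be?

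First I would write the tangent to $\sqrt{x_i}$ at an arbitrary point $t>0$ as $L_t(x_i)=\frac{x_i+t}{2\sqrt t}$ and ask for which $x_i$ we have $L_t(x_i)\le(1+\epsilon)\sqrt{x_i}$. The substitution $r=\sqrt{x_i/t}$ turns this into the quadratic inequality $r^2-2(1+\epsilon)r+1\le 0$, whose roots are $r_\pm=(1+\epsilon)\pm\sqrt{\epsilon(\epsilon+2)}$ and satisfy $r_-r_+=1$. Hence $L_t$ provides a $1+\epsilon$ approximation exactly on the interval $[\,t/r_+^2,\;t r_+^2\,]$, whose right-to-left endpoint ratio is $r_+^4$. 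Expanding $r_+^2=(1+\epsilon)^2+2(1+\epsilon)\sqrt{\epsilon(\epsilon+2)}+\epsilon(\epsilon+2)$ and collecting terms shows $r_+^2=1+2\epsilon(2+\epsilon)+2(1+\epsilon)\sqrt{\epsilon(2+\epsilon)}$, so $r_+^4=\gamma(\epsilon)$.

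The counting step is then immediate. Each of the $Q$ pieces of $\varphi_i$ is a segment of some tangent line $L_t$ and must $1+\epsilon$-approximate $\phi_i$ on the subinterval $[a_p,b_p]\subseteq[l_i,u_i]$ it covers; by the previous paragraph $[a_p,b_p]\subseteq[t/r_+^2,\,t r_+^2]$, so $b_p/a_p\le\gamma(\epsilon)$. Passing to logarithms converts multiplicative width into ordinary length, and since the $Q$ intervals $[a_p,b_p]$ must exhaust $[l_i,u_i]$ we obtain $Q\log\gamma(\epsilon)\ge\log(u_i/l_i)$, i.e.\ $Q\ge\log_{\gamma(\epsilon)}(u_i/l_i)$; as $Q$ is an integer, $Q\ge\lceil\log_{\gamma(\epsilon)}(u_i/l_i)\rceil$. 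The asymptotic statement follows from $\gamma(\epsilon)=1+\sqrt{32\epsilon}+O(\epsilon)$ as $\epsilon\to0$, whence $\log\gamma(\epsilon)\sim\sqrt{32\epsilon}$ and $\log_{\gamma(\epsilon)}(u_i/l_i)\sim\frac{1}{\sqrt{32\epsilon}}\log(u_i/l_i)$.

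There is no deep obstacle; the two points that need care are (a) recalling that tangents to a concave function lie above it, which is exactly what makes the lower-approximation constraint vacuous and lets us work with $L_t(x_i)\le(1+\epsilon)\sqrt{x_i}$ alone, and (b) the algebraic identity $r_+^4=\gamma(\epsilon)$, which is only bookkeeping. In the covering step one should be explicit that each piece's domain is a genuine interval and that the $Q$ domains cover all of $[l_i,u_i]$ — this is where the hypothesis that $\varphi_i$ approximates $\phi_i$ throughout $[l_i,u_i]$ is used.
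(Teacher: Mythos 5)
Your proof is correct and follows essentially the same route as the paper: both compute the multiplicative width $\gamma(\epsilon)$ of the interval on which a single tangent to $\sqrt{x_i}$ stays within a factor $1+\epsilon$, and then count how many such intervals are needed to cover $[l_i,u_i]$. Your substitution $r=\sqrt{x_i/t}$ (giving roots with product $1$) is a slightly cleaner way to reach the same quadratic the paper solves for $\delta$, and your explicit covering argument in log coordinates tightens the paper's somewhat informal ``iteratively introduce tangents'' step, but the content is identical.
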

\begin{proof}
Given a point $\xi_0 \in [l_i,u_i]$, a tangent to $\phi_i$ at $\xi_0$
guarantees a $1+\epsilon$ approximation on an interval extending to the left
and right of $\xi_0$. Let us denote this interval by $[\xi_0(1 + \delta_1),
  \xi_0(1 + \delta_2)]$. The values of $\delta_1$ and $\delta_2$ can be
found by solving with respect to $\delta$ the equation
\begin{subequations}
\begin{align}
\phi_i(\xi_0) + \delta \xi_0 \phi'_i (\xi_0) &= 
(1+\epsilon) \phi_i ((1+\delta)\xi_0) \\
\Leftrightarrow
\sqrt{\xi_0} + \delta \xi_0 \frac{1}{2\sqrt{\xi_0}} &= 
(1+\epsilon) \sqrt{ (1+\delta)\xi_0} \\
\Leftrightarrow
\xi_0 + \delta \xi_0 + \frac{1}{4} \delta^2 \xi_0  &= 
(1+\epsilon)^2 (1+\delta) \xi_0.
\end{align}
\end{subequations}
This is simply a quadratic equation with respect to $\delta$, and solving it
yields $\delta_1 =2 \epsilon(2+\epsilon) - 2(1+\epsilon)\sqrt{\epsilon
  (2+\epsilon)}$ and $\delta_2 = 2\epsilon(2+\epsilon) +
2(1+\epsilon)\sqrt{\epsilon (2+\epsilon)}$. Let $\xi_1=\xi_0 (1+\delta_1)$,
and note that $[\xi_0(1 + \delta_1), \xi_0(1 + \delta_2)] = \bigl[ \xi_1,
  \frac{1+\delta_2}{1+\delta_1} \xi_1 \bigr]$. Therefore, the tangent
provides a $1+\epsilon$ approximation on the interval
\begin{equation}
\left[\xi_1,\frac{1+\delta_2} {1+\delta_1} \xi_1\right] = 
\left[\xi_1, \left(1 + 2\epsilon(2 + \epsilon) +
2(1+\epsilon) \sqrt{\epsilon(2 + \epsilon)} \right)^2 \xi_1\right] =
[\xi_1,\gamma(\epsilon)\xi_1].
\end{equation}

Since $\gamma(\epsilon)$ does not depend on $\xi_1$, the best way to obtain
a $1+\epsilon$ approximation on $[l_i, u_i]$ is to iteratively introduce
tangents that provide approximations on intervals of the form $[l_i,
  \gamma(\epsilon) l_i]$, $[\gamma(\epsilon) l_i, \gamma^2(\epsilon) l_i],
[\gamma^2(\epsilon) l_i, \gamma^3(\epsilon) l_i], \dots$, until the entire
interval $[l_i, u_i]$ is covered. It immediately follows that we need at
least $\bigl\lceil \log_{\gamma(\epsilon)} \frac{u_i}{l_i} \bigr\rceil$
pieces to approximate $\phi_i$ on $[l_i,u_i]$.

This bound can also be written as $\bigl\lceil \frac{1}{\log
  \gamma(\epsilon)} \log \frac{u_i}{l_i} \bigr\rceil$. As $\epsilon\to 0$,
we have $\frac{1}{\log \gamma(\epsilon)} \to +\infty$ and
$\frac{\sqrt{32\epsilon}}{\log \gamma(\epsilon)} \to 1$, and therefore, the
lower bound behaves as $\frac{1}{\sqrt{32\epsilon}} \log \frac{u_i}{l_i}$.
\end{proof}
Combining Lemmas \ref{lm:low-bnd:reduct} and \ref{lm:low-bnd:tan}, we
immediately obtain a lower bound for any piecewise-linear approximation
approach. Let $\psi_i:\bbR_+ \to \bbR_+$ be a piecewise-linear function that
approximates $\phi_i(x_i) = \sqrt{x_i}$ to within a factor of $1+\epsilon$
on $[l_i, u_i]$. Note that $\psi_i$ need not be continuous or have its
pieces tangent to the graph of $\phi_i$.
\begin{theorem}
The function $\psi_i$ must contain at least $\bigl\lceil \frac{1}{3}
\log_{\gamma(\epsilon)} \frac{u_i}{l_i} \bigr\rceil$ pieces. As $\epsilon\to
0$, this lower bound behaves as $\frac{1}{\sqrt{288\epsilon}} \log
\frac{u_i}{l_i}$.
\end{theorem}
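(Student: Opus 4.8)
The plan is to obtain this theorem as an immediate corollary of the two preceding lemmas applied to the square root function $\phi_i(x_i) = \sqrt{x_i}$, which satisfies $\phi_i(x_i) > 0$ for all $x_i > 0$, so the hypotheses of Lemma \ref{lm:low-bnd:reduct} are met. Suppose $\psi_i$ has $Q$ pieces and approximates $\phi_i$ on $[l_i, u_i]$ to within a factor of $1+\epsilon$. First I would invoke Lemma \ref{lm:low-bnd:reduct} to replace $\psi_i$ by a piecewise-linear function $\varphi_i$ with at most $3Q$ pieces, each tangent to the graph of $\phi_i$, that still provides a $1+\epsilon$ approximation on $[l_i, u_i]$.

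Next I would apply Lemma \ref{lm:low-bnd:tan} to this $\varphi_i$: since it is a tangent-piece approximation of $\sqrt{x_i}$ on $[l_i,u_i]$ to within $1+\epsilon$, it must have at least $\bigl\lceil \log_{\gamma(\epsilon)} \frac{u_i}{l_i} \bigr\rceil$ pieces. Combining the two bounds gives $3Q \ge \bigl\lceil \log_{\gamma(\epsilon)} \frac{u_i}{l_i} \bigr\rceil \ge \log_{\gamma(\epsilon)} \frac{u_i}{l_i}$, hence $Q \ge \frac{1}{3}\log_{\gamma(\epsilon)} \frac{u_i}{l_i}$. Since $Q$ is a nonnegative integer, we may round the right-hand side up, obtaining $Q \ge \bigl\lceil \frac{1}{3}\log_{\gamma(\epsilon)} \frac{u_i}{l_i} \bigr\rceil$, which is the claimed bound.

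For the asymptotic statement, I would reuse the fact established in the proof of Lemma \ref{lm:low-bnd:tan} that $\frac{\sqrt{32\epsilon}}{\log \gamma(\epsilon)} \to 1$ as $\epsilon \to 0$ (equivalently $\log_{\gamma(\epsilon)} \frac{u_i}{l_i} \sim \frac{1}{\sqrt{32\epsilon}}\log\frac{u_i}{l_i}$). Dividing by $3$ gives that the lower bound behaves as $\frac{1}{3\sqrt{32\epsilon}}\log \frac{u_i}{l_i} = \frac{1}{\sqrt{9\cdot 32\,\epsilon}}\log\frac{u_i}{l_i} = \frac{1}{\sqrt{288\epsilon}}\log\frac{u_i}{l_i}$, as stated.

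There is essentially no serious obstacle here: the argument is a two-line composition of the lemmas. The only points requiring a moment of care are (i) checking that the chosen concrete function $\sqrt{x_i}$ legitimately feeds both lemmas, (ii) the rounding step, which uses integrality of $Q$ rather than dropping the ceiling outright, and (iii) that $\gamma(\epsilon) > 1$ so that $\log_{\gamma(\epsilon)}$ is well-defined and the ceiling is of a positive quantity — this follows from the inequality $\gamma(\epsilon) \ge 1 + \sqrt{32\epsilon}$ noted before Lemma \ref{lm:low-bnd:tan}. If one wished to phrase the asymptotics with full rigor, one would also observe that the ceiling changes the expression by at most an additive constant, which is negligible relative to the $\frac{1}{\sqrt{288\epsilon}}\log\frac{u_i}{l_i}$ main term as $\epsilon \to 0$.
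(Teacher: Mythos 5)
Your proposal is correct and is exactly the argument the paper intends: the theorem is stated as an immediate combination of Lemmas \ref{lm:low-bnd:reduct} and \ref{lm:low-bnd:tan} applied to $\phi_i(x_i)=\sqrt{x_i}$, with the factor $\frac{1}{3}$ coming from the $3Q$ bound and the asymptotics following from $\frac{1}{3\sqrt{32\epsilon}}=\frac{1}{\sqrt{288\epsilon}}$. Your added care about integrality of $Q$ for the rounding step and about $\gamma(\epsilon)>1$ is a slight elaboration beyond what the paper writes, but it is the same proof.
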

This lower bound is within a factor of $2+ \frac{3 \log
  \gamma(\epsilon)}{\log(1+4\epsilon+4\epsilon^2)}$ of the number of pieces
required by our approach. This implies that for fixed $\epsilon$, the number
of pieces required by our approach is within a constant factor of the best
possible. As $\epsilon \to 0$, the number of pieces needed by our approach
converges to a factor of $\frac{\sqrt{288\epsilon}}{4\epsilon} = O\bigl(
\frac{1}{\sqrt{\epsilon}} \bigr)$ of the lower bound. An interesting open
question is to find tighter upper and lower bounds on the number of pieces
as $\epsilon\to 0$.
\subsection{Extensions}
\label{sect:gen-feas:ext}
Our approximation approach applies to a broader class of problems. In this
section, we generalize our results to objective functions that are not
monotone and feasible sets that are not contained in $\bbR^n_+$. Consider
the problem
\begin{equation}
\label{mp:conc:gen-x}
Z^*_{\ref{mp:conc:gen-x}} = \min \{ \phi(x) : x \in X \},
\end{equation}
defined by a compact feasible set $X\subseteq \bbR^n$ and a separable
concave function $\phi:Y\to \bbR_+$. The feasible set $X$ need not be convex
or connected, and the set $Y$ can be any convex set in $\bbR^n$ that
contains $X$. Let $\phi(x) = \sum_{i=1}^n \phi_i(x_i)$, and assume that the
functions $\phi_i$ are nonnegative.

Instead of Assumption \ref{assum:gen}, we impose the following assumption.
Let $\proj_{x_i} Y$ denote the projection of $Y$ on $x_i$, and note that
$\proj_{x_i} Y$ is the domain of $\phi_i$.
\begin{assumption}
\label{assum:gen-x}
Problem \eqref{mp:conc:gen-x} has an optimal solution $x^* = (x^*_1, \dots,
x^*_n)$, bounds $\alpha_i, \beta_i$ with $[\alpha_i, \beta_i] \subseteq
\proj_{x_i} Y$, and bounds $l_i, u_i$ with $0 < l_i \le u_i$ such that
$x^*_i \in \{\alpha_i, \beta_i\} \cup \bigl( [\alpha_i + l_i, \alpha_i +
  u_i] \cap [\beta_i - u_i, \beta_i - l_i] \bigr)$ for $i\in [n]$.
\end{assumption}

Next, we apply the approach of equations
\eqref{eq:pieces-def}--\eqref{mp:pwl:gen} to approximate problem
\eqref{mp:conc:gen-x} to within a factor of $1+\epsilon$. We approximate
each concave function $\phi_i$ by a piecewise-linear function $\psi_i$.
Assume that the interval $[\alpha_i + l_i, \alpha_i + u_i] \cap [\beta_i -
  u_i, \beta_i - l_i]$ is nonempty; if this interval is empty, we have a
trivial case. For convenience, we define a new pair of bounds
\begin{equation}
l'_i = \max \{ l_i, \beta_i - u_i - \alpha_i \},
\qquad
u'_i = \min \{ u_i, \beta_i - l_i - \alpha_i\}. 
\label{eq:gen-x:lu}
\end{equation}
Note that $[\alpha_i + l_i, \alpha_i + u_i] \cap [\beta_i - u_i, \beta_i -
  l_i] = [\alpha_i + l'_i, \alpha_i + u'_i] = [\beta_i - u'_i, \beta_i -
  l'_i]$. Since $\phi_i$ is concave, there is a point $\xi^* \in [\alpha_i,
  \beta_i]$ such that $\phi_i$ is nondecreasing on $[\alpha_i, \xi^*]$ and
nonincreasing on $[\xi^*, \beta_i]$. We do not have to compute $\xi^*$ in
order to approximate $\phi_i$. Instead, we simply introduce tangents
starting from $\alpha_i + l'_i$ and advancing to the right, and starting
from $\beta_i - l'_i$ and advancing to the left.

More specifically, we introduce tangents starting from $\alpha_i + l'_i$
only if the slope at this point is nonnegative. We introduce tangents at
$\alpha_i + l'_i, \alpha_i + l'_i (1 + 4\epsilon + 4\epsilon^2), \dots,
\alpha_i + l'_i (1 + 4\epsilon + 4\epsilon^2)^{Q_i}$, where $Q_i$ is largest
integer such that $\alpha_i + l'_i (1 + 4\epsilon + 4\epsilon^2)^{Q_i} \le
\alpha_i + u'_i$ and the slope at $\alpha_i + l'_i (1 + 4\epsilon +
4\epsilon^2)^{Q_i}$ is nonnegative.

Let $\zeta_i = \min \bigl\{ \alpha_i + u'_i, \alpha_i + l'_i (1 + 4\epsilon
+ 4\epsilon^2)^{Q_i+1} \bigr\}$. If $\phi_i$ has a nonnegative slope at
$\zeta_i$, we introduce an additional tangent at $\zeta_i$. If the slope at
$\zeta_i$ is negative, we find the largest integer $r_i$ such that the slope
at $\alpha_i + l'_i (1 + 4\epsilon + 4\epsilon^2)^{Q_i} (1 +
\epsilon)^{r_i}$ is nonnegative, and introduce an additional tangent at that
point. Since the slope is nonnegative at $\alpha_i + l'_i (1 + 4\epsilon +
4\epsilon^2)^{Q_i}$, we have $r_i\ge 0$, and since $\zeta_i \le \alpha_i +
l'_i (1 + 4\epsilon + 4\epsilon^2)^{Q_i+1} \le \alpha_i + l'_i (1 +
4\epsilon + 4\epsilon^2)^{Q_i}(1 + \epsilon)^4$, we have $r_i \le 3$. Let
the tangents introduced starting from $\alpha_i + l'_i$ have slopes $s^0_i,
\dots, s^{Q_i+1}_i$ and y-intercepts $f^0_i, \dots, f^{Q_i+1}_i$.

We introduce tangents starting from $\beta_i - l'_i$ only if the slope at
this point is nonpositive. We proceed in the same way as with the tangents
starting from $\alpha_i + l'_i$, and let these tangents have slopes
$s^{Q_i+2}_i, \dots, s^{Q_i+R_i+3}_i$ and y-intercepts $f^{Q_i+2}_i, \dots,
f^{Q_i+R_i+3}_i$. Also let $P_i = Q_i + R_i + 3$.

If $\alpha_i$ and $\beta_i$ are the endpoints of $\proj_{x_i} Y$, for $x_i
\in (\alpha_i, \beta_i)$, the function $\psi_i$ is given by
\begin{equation}
\psi_i(x_i) = \min \{ f^p_i + s^p_i x_i : p=0, ..., P_i \},
\end{equation}
while for $x_i \in \{\alpha_i, \beta_i\}$, we let $\psi_i(x_i) =
\phi_i(x_i)$.

If $\alpha_i$ and $\beta_i$ are in the interior of $\proj_{x_i} Y$, we
introduce two more tangents at $\alpha_i$ and $\beta_i$, with slopes
$s^{P_i+1}_i, s^{P_i+2}_i$ and y-intercepts $f^{P_i+1}_i, f^{P_i+2}_i$, and
let $\psi_i(x_i) = \min \{ f^p_i + s^p_i x_i : p=0, ..., P_i +2 \}$. If one
of $\alpha_i, \beta_i$ is in the interior and the other is an endpoint, we
use the corresponding approach in each case.

We now replace the objective function $\phi(x)$ in problem
\eqref{mp:conc:gen-x} with the new objective function $\psi(x) =
\sum_{i=1}^n \psi_i(x_i)$, obtaining the piecewise-linear cost problem
\begin{equation}
\label{mp:pwl:gen-x}
Z^*_{\ref{mp:pwl:gen-x}} = \min \{ \psi(x) : x \in X \}.
\end{equation}

The number of pieces used to approximate each concave function $\phi_i$ in
each direction is at most $2 + \bigl\lceil \log_{1 + 4\epsilon +
  4\epsilon^2} \frac{u_i}{l_i} \bigr\rceil$, and therefore the total number
of pieces used for each function is at most $4 + 2 \bigl\lceil \log_{1 +
  4\epsilon + 4\epsilon^2} \frac{u_i}{l_i} \bigr\rceil$. As $\epsilon \to
0$, this bound behaves as $\frac{1}{2\epsilon} \log \frac{u_i}{l_i}$.

It remains to show that problem \eqref{mp:pwl:gen-x} provides a $1+\epsilon$
approximation for problem \eqref{mp:conc:gen-x}, which we do by employing
Lemma \ref{lm:gen} and Theorem \ref{th:gen-tight}.
\begin{lemma}
$Z^*_{\ref{mp:conc:gen-x}} \le Z^*_{\ref{mp:pwl:gen-x}}
\le (1+\epsilon) Z^*_{\ref{mp:conc:gen-x}}$.
\label{lm:gen-x}
\end{lemma}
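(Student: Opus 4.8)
The plan is to reduce Lemma~\ref{lm:gen-x} to the already-established monotone case (Lemma~\ref{lm:gen}, refined by Theorem~\ref{th:gen-tight}) by exploiting the separability of $\phi$ and the decomposition of each $\phi_i$ into a nondecreasing part and a nonincreasing part. Since $\phi(x)=\sum_i\phi_i(x_i)$ and $\psi(x)=\sum_i\psi_i(x_i)$, and since problem~\eqref{mp:pwl:gen-x} is a minimization over the same feasible set $X$, it suffices to prove the two inequalities coordinatewise on the relevant portion of the domain: namely that (a)~$\phi_i(x_i)\le\psi_i(x_i)$ for every $x_i$ that can occur in an optimal solution of~\eqref{mp:pwl:gen-x}, and (b)~$\psi_i(x^*_i)\le(1+\epsilon)\phi_i(x^*_i)$ where $x^*$ is the optimal solution of~\eqref{mp:conc:gen-x} guaranteed by Assumption~\ref{assum:gen-x}.

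For the lower inequality $Z^*_{\ref{mp:conc:gen-x}}\le Z^*_{\ref{mp:pwl:gen-x}}$, the argument is the same as in Lemma~\ref{lm:gen}: every piece $f_i^p+s_i^p x_i$ of $\psi_i$ is a tangent line to the concave graph of $\phi_i$, hence lies (weakly) above $\phi_i$ everywhere on $\proj_{x_i}Y$, so $\psi_i(x_i)=\min_p\{f_i^p+s_i^p x_i\}\ge\phi_i(x_i)$ for $x_i\in(\alpha_i,\beta_i)$, and $\psi_i=\phi_i$ at the endpoints by construction. Summing over $i$ and evaluating at an optimal solution of~\eqref{mp:pwl:gen-x} gives the claim. (One should note the convexity/compactness point: $\psi$ is concave and $X$ compact, so~\eqref{mp:pwl:gen-x} attains its minimum.)

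The upper inequality is where the real work lies. Fix $i$ and the optimal point $x^*_i$ from Assumption~\ref{assum:gen-x}. If $x^*_i\in\{\alpha_i,\beta_i\}$ then $\psi_i(x^*_i)=\phi_i(x^*_i)$ and we are done. Otherwise $x^*_i\in[\alpha_i+l'_i,\alpha_i+u'_i]=[\beta_i-u'_i,\beta_i-l'_i]$ by~\eqref{eq:gen-x:lu}. Let $\xi^*$ be the point where $\phi_i$ switches from nondecreasing to nonincreasing. If $x^*_i\le\xi^*$, we use the tangents introduced advancing rightward from $\alpha_i+l'_i$: on the shifted variable $t=x_i-\alpha_i$, the function $t\mapsto\phi_i(\alpha_i+t)$ is nonnegative, concave, and nondecreasing on $[l'_i,u'_i]$ past which we only care up to $\xi^*-\alpha_i$, and the tangents sit at $l'_i(1+4\epsilon+4\epsilon^2)^p$ — exactly the geometric grid of the monotone case with ratio $1+4\epsilon+4\epsilon^2$ — so Theorem~\ref{th:gen-tight} (equivalently the ``$1+\epsilon$ with base $1+4\epsilon+4\epsilon^2$'' reformulation) gives $\psi_i(x^*_i)\le(1+\epsilon)\phi_i(x^*_i)$, provided $x^*_i$ is covered. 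The coverage is ensured precisely by the extra tangent at $\zeta_i$ and, when the slope there is negative, by the tangent at $\alpha_i+l'_i(1+4\epsilon+4\epsilon^2)^{Q_i}(1+\epsilon)^{r_i}$: since $\phi_i$ is still increasing up to $\xi^*$, the last ``useful'' grid point is at least $\min\{\xi^*,\alpha_i+u'_i\}$, and the $r_i\le 3$ bound guarantees the final gap is at most a factor $(1+\epsilon)^4$, split into at most four sub-intervals each handled by a tangent, so the $1+\epsilon$ guarantee is maintained there too. The symmetric case $x^*_i\ge\xi^*$ is handled identically with the leftward tangents from $\beta_i-l'_i$ under the reflection $t=\beta_i-x_i$. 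Finally, when $\alpha_i,\beta_i$ lie in the interior of $\proj_{x_i}Y$, the two extra tangents at $\alpha_i$ and $\beta_i$ make $\psi_i$ agree with $\phi_i$ (from above, with equality) at those endpoints, which is what the endpoint case of~(b) needs.

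I expect the main obstacle to be the bookkeeping around the ``boundary tangents'' — verifying that the finitely many extra tangents near $\zeta_i$ (and the analogous ones on the decreasing side, and at $\alpha_i,\beta_i$ when they are interior) genuinely cover the sub-interval where $x^*_i$ can land, and that on each such stray sub-interval the approximation ratio is still within $1+\epsilon$ rather than degrading. The clean way to organize this is: (i) state a one-line reduction lemma that a nonnegative concave $\phi_i$, nondecreasing on $[c,d]$, approximated by tangents on the grid $\{c(1+4\epsilon+4\epsilon^2)^p\}$ plus $O(1)$ extra tangents closing the last gap of ratio $\le(1+\epsilon)^4$, is approximated within $1+\epsilon$ on $[c,d']$ for the appropriate $d'$ — this is just Theorem~\ref{th:gen-tight} applied interval-by-interval — and (ii) apply it twice, once to $t\mapsto\phi_i(\alpha_i+t)$ and once to $t\mapsto\phi_i(\beta_i-t)$, observing that every $x^*_i\notin\{\alpha_i,\beta_i\}$ lies in the guaranteed range of at least one of the two by the $\xi^*$ dichotomy. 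Everything else is routine summation over $i\in[n]$.
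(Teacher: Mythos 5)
Your overall strategy---the lower bound from tangency, the endpoint case, the dichotomy ``$\phi_i$ nondecreasing on $[\alpha_i,x^*_i]$ or nonincreasing on $[x^*_i,\beta_i]$'' reducing to the monotone case, Theorem~\ref{th:gen-tight} on the geometric grid, and special handling of the final gap---is exactly the paper's. The gap is in the one step you defer to ``bookkeeping,'' which is in fact the crux of the proof. Your description of the final gap as ``split into at most four sub-intervals each handled by a tangent'' does not match the construction: on the interval from $\alpha_i+l'_i(1+4\epsilon+4\epsilon^2)^{Q_i}$ to $\zeta_i$ only \emph{one} extra tangent is introduced (at $\zeta_i$ if the slope there is nonnegative, otherwise at $\alpha_i+l'_i(1+4\epsilon+4\epsilon^2)^{Q_i}(1+\epsilon)^{r_i}$), so the sub-intervals of ratio $1+\epsilon$ are not each equipped with their own tangent, and ``Theorem~\ref{th:gen-tight} applied interval-by-interval'' is not available there.

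The paper closes this as follows. First one must show that $x^*_i$ actually lies in $\bigl[\alpha_i+l'_i(1+4\epsilon+4\epsilon^2)^{Q_i},\ \alpha_i+l'_i(1+4\epsilon+4\epsilon^2)^{Q_i}(1+\epsilon)^{r_i+1}\bigr]$; this uses that the slope at $x^*_i$ is nonnegative (from monotonicity on $[\alpha_i,x^*_i]$ plus concavity) together with the maximality of $r_i$---your remark that the ``last useful grid point'' is at least $\min\{\xi^*,\alpha_i+u'_i\}$ does not substitute for this containment. Then two distinct arguments are needed: if $x^*_i$ lies between the last grid tangent and the extra tangent, the bracketing pair of tangents is at ratio $(1+\epsilon)^{r_i}\le(1+\epsilon)^3\le 1+4\epsilon+4\epsilon^2$, so the tight two-tangent analysis of Theorem~\ref{th:gen-tight} applies; but if $x^*_i$ lies in the final sub-interval of ratio $1+\epsilon$ \emph{beyond} the extra tangent, there is no tangent to its right, and one must fall back on the cruder single-left-tangent bound of Lemma~\ref{lm:gen}, which gives exactly $1+\epsilon$ with no slack. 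Without separating these two sub-cases and verifying the containment above, the $1+\epsilon$ guarantee on the final gap is asserted rather than proved; the rest of your argument is sound and matches the paper.
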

\begin{proof}
Clearly, $Z^*_{\ref{mp:conc:gen-x}} \le Z^*_{\ref{mp:pwl:gen-x}}$. To prove
the inequality's other side, let $x^*$ be an optimal solution to problem
\eqref{mp:conc:gen-x} that satisfies Assumption \ref{assum:gen-x}. We will
show that $\psi_i(x^*_i) \le (1+\epsilon) \phi_i(x^*_i)$ for $i\in [n]$. If
$x^*_i \in \{\alpha_i, \beta_i\}$ then $\psi_i(x^*_i) = \phi_i(x^*_i)$. If
$x^*_i \not\in \{\alpha_i, \beta_i\}$, we must have $x^*_i \in [\alpha_i +
  l'_i, \alpha_i + u'_i] = [\beta_i - u'_i, \beta_i - l'_i]$. Since $\phi_i$
is concave, it is nondecreasing on $[\alpha_i, x^*_i]$, nonincreasing on
$[x^*_i, \beta_i]$, or both. Without loss of generality, assume that
$\phi_i$ is nondecreasing on $[\alpha_i, x^*_i]$.

Due to the way we introduced tangents starting from $\alpha_i + l'_i$, it
follows that $x^*_i \in [ \alpha_i + l'_i, \zeta_i]$. We divide this
interval into two subintervals, $\bigl[ \alpha_i + l'_i, \alpha_i + l'_i (1
  + 4\epsilon + 4\epsilon^2)^{Q_i} \bigr]$ and $\bigl[ \alpha_i + l'_i (1 +
  4\epsilon + 4\epsilon^2)^{Q_i}, \zeta_i \bigr]$. If $x^*_i \in \bigl[
  \alpha_i + l'_i, \alpha_i + l'_i (1 + 4\epsilon + 4\epsilon^2)^{Q_i}
  \bigr]$, then since $\phi_i$ is nondecreasing on this interval,
$\psi_i(x^*_i) \le (1+\epsilon) \phi_i(x^*_i)$ follows directly from Theorem
\ref{th:gen-tight}.

If $x^*_i \in \bigl[ \alpha_i + l'_i (1 + 4\epsilon + 4\epsilon^2)^{Q_i},
  \zeta_i \bigr]$, additional steps are needed, since $\phi_i$ is not
necessarily nondecreasing on this interval. If $\phi_i$ has a nonnegative
slope at $\zeta_i$, then we introduced a tangent at $\zeta_i$, and
$\psi_i(x^*_i) \le (1+\epsilon) \phi_i(x^*_i)$ again follows from Theorem
\ref{th:gen-tight}. If the slope at $\zeta_i$ is negative, we introduced a
tangent at $\alpha_i + l'_i (1 + 4\epsilon + 4\epsilon^2)^{Q_i} (1 +
\epsilon)^{r_i}$. Since $r_i$ is the largest integer such that the slope at
$\alpha_i + l'_i (1 + 4\epsilon + 4\epsilon^2)^{Q_i} (1 + \epsilon)^{r_i}$
is nonnegative, and the slope at $x^*_i$ is also nonnegative, $x^*_i \in
\bigl[ \alpha_i + l'_i (1 + 4\epsilon + 4\epsilon^2)^{Q_i}, \alpha_i + l'_i
  (1 + 4\epsilon + 4\epsilon^2)^{Q_i} (1+\epsilon)^{r_i + 1} \bigr]$.

We now distinguish two cases. If $x^*_i \in \bigl[ \alpha_i + l'_i (1 +
  4\epsilon + 4\epsilon^2)^{Q_i}, \alpha_i + l'_i (1 + 4\epsilon +
  4\epsilon^2)^{Q_i} (1+\epsilon)^{r_i} \bigr]$, then since $\phi_i$ is
nondecreasing on this interval, $\psi_i(x^*_i) \le (1+\epsilon)
\phi_i(x^*_i)$ follows by Theorem \ref{th:gen-tight}. If $x^*_i \in \bigl[
  \alpha_i + l'_i (1 + 4\epsilon + 4\epsilon^2)^{Q_i} (1+\epsilon)^{r_i},
  \alpha_i + l'_i (1 + 4\epsilon + 4\epsilon^2)^{Q_i} (1+\epsilon)^{r_i + 1}
  \bigr]$, note that the right endpoint of this interval is $1+\epsilon$
times farther from $\alpha_i$ than the left endpoint. Since $\phi_i$ is
nondecreasing from the left endpoint to $x^*$, and we introduced a tangent
at the left endpoint, $\psi_i(x^*_i) \le (1+\epsilon) \phi_i(x^*_i)$ follows
by Lemma \ref{lm:gen}.

Taken together, the above cases imply that $Z^*_{\ref{mp:pwl:gen-x}} \le
\psi(x^*) \le (1 + \epsilon) \phi(x^*) = (1+\epsilon)
Z^*_{\ref{mp:conc:gen-x}}$.
\end{proof}
We conclude this section with two further extensions:
\begin{enumerate}[label=\arabic*)]
\item
We can use secants instead of tangents, in which case we require on the
order of one function evaluation per piece, and do not need to evaluate the
derivative. The secant approach may be preferable in computational
applications where derivatives are difficult to compute.
\item
The results in this section can be adapted to apply to concave maximization
problems.
\end{enumerate}
\section{Polyhedral Feasible Sets}
\label{sect:poly-feas}
In this section and Section \ref{sect:poly-feas:ext}, we obtain the main
result of this paper by applying our approximation approach to concave cost
problems with polyhedral feasible sets. We will employ the polyhedral
structure of the feasible set to eliminate the quantities $l_i$ and $u_i$
from the bound on the number of pieces, and obtain a bound that is
polynomial in the input size of the concave cost problem and linear in
$1/\epsilon$.

Let $X = \{ x : Ax \le b, x\ge 0\}$ be a nonempty rational polyhedron
defined by a matrix $A\in \bbQ^{m\times n}$ and a vector $b\in \bbQ^m$. Let
$\phi: \bbR^n_+ \to \bbR_+$ be a nondecreasing separable concave function,
with $\phi(x) = \sum_{i=1}^n \phi_i(x_i)$ and each function $\phi_i$
nonnegative. We consider the problem
\begin{equation}
Z^*_{\ref{mp:conc:poly}} = \min\{ \phi(x) : Ax \le b, x\ge 0\}.
\label{mp:conc:poly}
\end{equation}

Following standard practice, we define the size of rational numbers,
vectors, and matrices as the number of bits needed to represent them
\cite[see e.g.][]{MR2003b:90004}. More specifically, for an integer $r$, let
$\size(r) = 1 + \lceil \log_2(|r| + 1) \rceil$; for a rational number $r =
\frac{r_1}{r_2}$ with $r_2 > 0$, and $r_1$ and $r_2$ coprime integers, let
$\size(r) = \size(r_1) + \size(r_2)$; and for a rational vector or matrix
$M\in \bbQ^{p\times q}$ with elements $m_{ij}$, let $\size(M) = p q +
\sum_{i=1}^p \sum_{j=1}^q \size(m_{ij})$.

We take the input size of problem \eqref{mp:conc:poly} to be the input size
of the feasible polyhedron, $\size(A) + \size(b)$. Assume that each function
$\phi_i$ is given by an oracle that returns the function value $\phi_i(x_i)$
and derivative $\phi'_i(x_i)$ in time $O(1)$. When the concave functions are
given in other ways than through oracles, the input size of problem
\eqref{mp:conc:poly} is at least $\size(A) + \size(b)$, and therefore our
bound applies in those cases as well.

We will use the following classical result that bounds the size of a
polyhedron's vertices in terms of the size of the constraint matrix and
right-hand side vector that define the polyhedron \cite[see
  e.g.][]{MR2003b:90004}. Let $U(A,b) = 4(\size(A) + \size(b) + 2n^2 + 3n)$.
\begin{lemma}
If $x'=(x'_1, \dots, x'_n)$ is a vertex of $X$, then each of its components
has $\size(x'_i) \le U(A,b)$.
\label{lm:vert-size}
\end{lemma}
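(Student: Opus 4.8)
The plan is to invoke the standard linear-algebra bound on the size of a basic feasible solution of a rational linear system. First I would recall that every vertex $x'$ of $X = \{x : Ax \le b,\ x \ge 0\}$ is a basic feasible solution: it is the unique solution of a subsystem $A'x = b'$, where the rows of $(A',b')$ are drawn from the rows of $(A,b)$ together with rows of the identity (coming from the constraints $x_j \ge 0$ that are tight at $x'$), and $A'$ is a nonsingular $n \times n$ matrix. Thus $x'_i = \det(A'_i)/\det(A')$ by Cramer's rule, where $A'_i$ is $A'$ with its $i$-th column replaced by $b'$. Both $\det(A')$ and $\det(A'_i)$ are integers (after clearing denominators) whose absolute value is controlled by Hadamard's inequality.

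The key steps, in order, are: (1) write $x'$ as a basic feasible solution and express its components via Cramer's rule; (2) bound $\size(\det(A'))$ and $\size(\det(A'_i))$ in terms of $\size(A)$ and $\size(b)$, using the fact that the determinant of an integer matrix is bounded by the product of the Euclidean norms of its rows (Hadamard) and that taking determinants of submatrices of $(A,b)$ and of $(A,b)$ augmented with identity rows does not blow up the bit-size beyond a modest additive/multiplicative overhead; (3) combine these via $\size(x'_i) = \size(p/q) \le \size(p) + \size(q)$ to get a bound of the claimed form $U(A,b) = 4(\size(A) + \size(b) + 2n^2 + 3n)$. All three steps are completely standard; in fact this lemma is quoted verbatim from Schrijver (the citation \cite{MR2003b:90004} in the excerpt), so the honest thing to do is to cite it rather than reprove it. I would therefore present the proof as essentially a pointer to the literature, perhaps spelling out the Cramer's-rule reduction in one or two lines for the reader's convenience, and then stating that the arithmetic needed to track the constants through Hadamard's inequality yields the stated $U(A,b)$.

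There is no real obstacle here: the only mildly fiddly part is bookkeeping the exact constant $4(\size(A) + \size(b) + 2n^2 + 3n)$ — in particular accounting for the $n$ extra identity-type rows that may enter the basis (contributing the $2n^2 + 3n$ terms once one expands $\size$ of an $n\times n$ sign/identity pattern and of the augmented column), and for the fact that $\size$ of a product of two numbers is at most the sum of their sizes, so a quotient of two determinants costs at most twice the size of the larger determinant. Since the determinant bound via Hadamard already sits inside $\size(A)+\size(b)$ up to small polynomial-in-$n$ corrections, multiplying by the factor $2$ from the quotient and absorbing the identity-row overhead is exactly what produces the leading $4$ and the $2n^2 + 3n$ correction. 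Because this is a textbook result, I would keep the write-up short: cite \cite{MR2003b:90004}, note the Cramer's-rule representation, and observe that the constant has been chosen generously enough that the standard estimate goes through.
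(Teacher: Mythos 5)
Your proposal is correct and matches the paper exactly: the paper gives no proof of this lemma, presenting it as a classical result cited from Schrijver \cite{MR2003b:90004}, which is precisely what you recommend doing. Your sketch of the underlying argument (basic feasible solution, Cramer's rule, Hadamard, with the $2n^2+3n$ term accounting for the appended identity rows and zero right-hand side from $x \ge 0$) is the standard one and is consistent with the stated constant.
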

To approximate problem \eqref{mp:conc:poly}, we replace each concave
function $\phi_i$ with a piecewise-linear function $\psi_i$ as described in
equations \eqref{eq:pieces-def}--\eqref{mp:pwl:gen}. To obtain each function
$\psi_i$, we take
\begin{equation}
l_i = \frac{1}{2^{U(A,b)-1} - 1}, \qquad u_i = 2^{U(A,b)-1} - 1,
\label{eq:poly:lu}
\end{equation} 
and $P_i = \lceil \log_{1 + 4\epsilon + 4\epsilon^2} \frac{u_i}{l_i}
\rceil$, and introduce $P_i+1$ tangents to $\phi_i$ at $l_i, l_i(1 +
4\epsilon + 4\epsilon^2), \dots, l_i(1 + 4\epsilon + 4\epsilon^2)^{P_i}$.
The resulting piecewise-linear cost problem is
\begin{equation}
Z^*_{\ref{mp:pwl:poly}} =\min\{ \psi(x) : Ax \le b, x\ge 0\}.
\label{mp:pwl:poly}
\end{equation}

The number of pieces used to approximate each function $\phi_i$ is
\begin{equation}
1 + \Bigl\lceil \log_{1 + 4\epsilon + 4\epsilon^2} \frac{u_i}{l_i}
\Bigr\rceil \le 1 + \bigl\lceil \log_{1 + 4\epsilon + 4\epsilon^2}
2^{2U(A,b)} \bigr\rceil = 1 + \biggl\lceil \frac{2U(A,b)}{\log_2 (1 +
  4\epsilon + 4\epsilon^2)} \biggr\rceil.
\label{eq:poly:bound}
\end{equation}
As $\epsilon \to 0$, this bound behaves as $\frac{2 U(A,b)}{4 (\log_2 e)
  \epsilon} = \frac{U(A,b)}{2 (\log_2 e) \epsilon}$. Therefore, the obtained
bound is polynomial in the size of the input and linear in $1/\epsilon$. The
time needed to compute the piecewise-linear approximation is also polynomial
in the size of the input and linear in $1/\epsilon$. Specifically, we can
compute all the quantities $l_i$ and $u_i$ in $O(U(A,b))$, and then compute
the pieces composing each function $\psi_i$ in $O\bigl( \frac{
  U(A,b)}{\log_2 (1 + 4\epsilon + 4\epsilon^2)} \bigr)$ per function, for a
total running time of $O\bigl( U(A,b) + \frac{n U(A,b)}{\log_2 (1 +
  4\epsilon + 4\epsilon^2)} \bigr) = O\bigl( \frac{n U(A,b)}{\epsilon}
\bigr)$.

Next, we apply Theorem \ref{th:gen-tight} to show that problem
\eqref{mp:pwl:poly} approximates problem \eqref{mp:conc:poly} to within a
factor of $1+\epsilon$.
\begin{lemma}
$Z^*_{\ref{mp:conc:poly}} \le Z^*_{\ref{mp:pwl:poly}} 
\le (1+\epsilon) Z^*_{\ref{mp:conc:poly}}$.
\label{lm:poly}
\end{lemma}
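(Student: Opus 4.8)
The plan is to derive Lemma~\ref{lm:poly} as a corollary of Theorem~\ref{th:gen-tight} by verifying that Assumption~\ref{assum:gen} holds for problem~\eqref{mp:conc:poly} with the specific choice of bounds $l_i, u_i$ in~\eqref{eq:poly:lu}. The key observation is that, although problem~\eqref{mp:conc:poly} is a concave minimization over a polyhedron and thus attains its optimum at a vertex of $X$, the relevant quantity is not just that an optimal solution exists but that its positive components lie in a known interval $[l_i, u_i]$ whose ratio $u_i/l_i$ is bounded in terms of the input size. First I would note that since $\phi$ is concave and $X$ is a nonempty polyhedron on which $\phi$ is bounded below (by $0$), the minimum in~\eqref{mp:conc:poly} is attained, and moreover it is attained at a vertex $x^*$ of $X$ — this is the standard fact that a concave function minimized over a polyhedron with a vertex achieves its minimum at a vertex (and $X \subseteq \bbR^n_+$ has vertices since it contains no line).

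Next I would invoke Lemma~\ref{lm:vert-size}: each component $x^*_i$ of the vertex $x^*$ satisfies $\size(x^*_i) \le U(A,b)$. From the definition of $\size$ of a rational number, if $x^*_i = r_1/r_2 \ne 0$ in lowest terms with $r_2 > 0$, then $\size(x^*_i) = \size(r_1) + \size(r_2) \le U(A,b)$, which forces both $|r_1| \le 2^{U(A,b)-1} - 1$ and $r_2 \le 2^{U(A,b)-1} - 1$ (using $\size(r) = 1 + \lceil \log_2(|r|+1)\rceil$, so $\size(r) \le k$ implies $|r| \le 2^{k-1}-1$). Hence for every nonzero component,
\begin{equation*}
l_i = \frac{1}{2^{U(A,b)-1}-1} \le |x^*_i| = \frac{|r_1|}{r_2} \le 2^{U(A,b)-1} - 1 = u_i.
\end{equation*}
Since $X \subseteq \bbR^n_+$, we have $x^*_i \ge 0$, so $x^*_i = 0$ or $x^*_i \in [l_i, u_i]$; thus $x^*_i \in \{0\} \cup [l_i, u_i]$, and clearly $0 < l_i \le u_i$ (one checks $2^{U(A,b)-1} - 1 \ge 1$ since $U(A,b) \ge 2$). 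This establishes Assumption~\ref{assum:gen} for problem~\eqref{mp:conc:poly} with these bounds.

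With Assumption~\ref{assum:gen} verified, I would apply Theorem~\ref{th:gen-tight} directly: the piecewise-linear function $\psi_i$ built in~\eqref{eq:pieces-def}--\eqref{mp:pwl:gen} with base $1 + 4\epsilon + 4\epsilon^2$ in place of $1+\epsilon$ achieves, on $[l_i, u_i]$, an approximation ratio of $\frac{1 + \sqrt{(4\epsilon + 4\epsilon^2) + 1}}{2} = \frac{1 + \sqrt{(1+2\epsilon)^2}}{2} = \frac{1 + (1+2\epsilon)}{2} = 1 + \epsilon$. More precisely, the two-sided conclusion of Theorem~\ref{th:gen-tight}, applied with parameter $4\epsilon + 4\epsilon^2$, gives $Z^*_{\ref{mp:conc:poly}} \le Z^*_{\ref{mp:pwl:poly}} \le (1+\epsilon) Z^*_{\ref{mp:conc:poly}}$, which is exactly the claim. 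The lower inequality $Z^*_{\ref{mp:conc:poly}} \le Z^*_{\ref{mp:pwl:poly}}$ comes from the fact that each tangent line lies above the graph of $\phi_i$, so $\psi_i \ge \phi_i$ pointwise (this half does not even need Assumption~\ref{assum:gen}); the upper inequality uses the vertex optimal solution $x^*$ to produce a feasible point whose $\psi$-cost is at most $(1+\epsilon)$ times its $\phi$-cost, which is at most $(1+\epsilon) Z^*_{\ref{mp:conc:poly}}$.

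The only genuine obstacle — and it is mild — is the passage from the size bound $\size(x^*_i) \le U(A,b)$ to the explicit numeric bounds $l_i \le |x^*_i| \le u_i$; one must be careful that $\size$ of a rational is the sum of the sizes of numerator and denominator (in lowest terms), so that a small $\size$ simultaneously caps the numerator and the denominator, giving both the upper bound $u_i$ (numerator $\le 2^{U-1}-1$, denominator $\ge 1$) and the lower bound $l_i$ (numerator $\ge 1$ for a nonzero value, denominator $\le 2^{U-1}-1$). Everything else is bookkeeping: checking $4\epsilon + 4\epsilon^2$ is a legitimate value of the ``$\epsilon$'' parameter in Theorem~\ref{th:gen-tight} (it is positive), and checking $\sqrt{1 + 4\epsilon + 4\epsilon^2} = 1 + 2\epsilon$ so that the guarantee collapses to exactly $1+\epsilon$. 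I expect the write-up to be four or five sentences long.
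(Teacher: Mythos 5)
Your overall route is the same as the paper's: vertex optimality of the concave minimization, Lemma \ref{lm:vert-size} to convert the size bound $\size(x^*_i) \le U(A,b)$ into the numeric bounds $l_i \le x^*_i \le u_i$ for nonzero components (your careful unpacking of numerator and denominator in lowest terms is exactly the intended reading), and then Theorem \ref{th:gen-tight} with parameter $4\epsilon + 4\epsilon^2$ so that $\frac{1+\sqrt{1+4\epsilon+4\epsilon^2}}{2} = 1+\epsilon$. That part is all correct.

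The gap is that you apply Theorem \ref{th:gen-tight} ``directly'' without checking its standing hypothesis from Section \ref{sect:gen-feas} that the feasible set be \emph{compact}. The polyhedron $X = \{x : Ax \le b,\ x \ge 0\}$ need not be bounded, and compactness is used in Lemma \ref{lm:gen} (on which Theorem \ref{th:gen-tight} rests) to guarantee that the piecewise-linear problem \eqref{mp:pwl:poly} attains its minimum. The paper spends the second half of its proof on exactly this point: when $X$ is unbounded it adds the box constraints $x_i \le 2^{U(A,b)-1}-1$, observes that both a vertex optimal solution of \eqref{mp:conc:poly} and a vertex optimal solution of \eqref{mp:pwl:poly} lie inside the box (so neither optimal value changes), and only then invokes Theorem \ref{th:gen-tight} on the truncated, compact problem. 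Your closing sketch of the two inequalities --- $\psi_i \ge \phi_i$ pointwise for the lower bound, and evaluation of $\psi$ at the vertex $x^*$ for the upper bound --- in fact contains the material needed to bypass the issue (the lower bound works at the level of infima, and the upper bound only needs $x^*$ to be feasible), but as written the appeal to Theorem \ref{th:gen-tight} is not licensed for unbounded $X$; you need either the truncation step or an explicit remark that a vertex optimal solution of \eqref{mp:pwl:poly} exists because $\psi$ is concave and nonnegative on a pointed polyhedron.
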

\begin{proof} %
It is clear that problem \eqref{mp:conc:poly} satisfies the assumptions
needed by Theorem \ref{th:gen-tight}, except for Assumption \ref{assum:gen}
and the requirement that $X$ be a compact set. Next, we consider these two
assumptions.

Because $X$ is a polyhedron in $\bbR^n_+$ and $\phi$ is concave and
nonnegative, problem \eqref{mp:conc:poly} has an optimal solution $x^*$ at a
vertex of $X$ \cite{MR0131816}. Lemma \ref{lm:vert-size} ensures that
$\size(x^*_i) \le U(A,b)$ for $i\in [n]$, and hence $x^*_i \in \{ 0 \} \cup
\bigl[ \frac{1}{2^{U(A,b)-1} - 1}, 2^{U(A,b)-1} -1 \bigr]$. Therefore,
problem \eqref{mp:conc:poly} together with the bounds $l_i$ and $u_i$, and
the optimal solution $x^*$ satisfies Assumption \ref{assum:gen}.

If the polyhedron $X$ is bounded, then Theorem \ref{th:gen-tight} applies,
and the approximation property follows. If $X$ is unbounded, we add the
constraints $x_i \le 2^{U(A,b)-1} - 1$ for $i\in [n]$ to problems
\eqref{mp:conc:poly} and \eqref{mp:pwl:poly}, obtaining the modified
problems
\begin{align}
Z^*_{\ref{mp:conc:poly}\rmB} &= \min \bigl\{ \phi(x) : Ax \le b, 
0\le x\le 2^{U(A,b)-1}-1 \bigr\}, 
\tag{\ref{mp:conc:poly}B} \label{mp:conc:poly-b}\\
Z^*_{\ref{mp:pwl:poly}\rmB} &=\min \bigl\{ \psi(x) : Ax \le b, 
0\le x\le 2^{U(A,b)-1}-1 \bigr\}. 
\tag{\ref{mp:pwl:poly}B} \label{mp:pwl:poly-b}
\end{align}
%
Denote the modified feasible polyhedron by $X_\rmB$. Since $X_\rmB \subseteq
X$ and $x^*\in X_\rmB$, it follows that $Z^*_{\mathrm{\ref{mp:conc:poly-b}}}
= Z^*_{\ref{mp:conc:poly}}$ and $x^*$ is an optimal solution to problem
(\ref{mp:conc:poly-b}). Similarly, let $y^*$ be a vertex optimal solution to
problem \eqref{mp:pwl:poly}; since $X_\rmB \subseteq X$ and $y^*\in X_\rmB$,
we have $Z^*_{\mathrm{\ref{mp:pwl:poly-b}}} = Z^*_{\ref{mp:pwl:poly}}$.

Since $X_\rmB$ is a bounded polyhedron, problem (\ref{mp:conc:poly-b}),
together with the bounds $l_i$ and $u_i$, and the optimal solution $x^*$
satisfies the assumptions needed by Theorem \ref{th:gen-tight}. When we
approximate problem (\ref{mp:conc:poly-b}) using the approach of equations
\eqref{eq:pieces-def}--\eqref{mp:pwl:gen}, we obtain problem
(\ref{mp:pwl:poly-b}), and therefore $Z^*_{\mathrm{\ref{mp:conc:poly-b}}}
\le Z^*_{\mathrm{\ref{mp:pwl:poly-b}}} \le (1 + \epsilon)
Z^*_{\mathrm{\ref{mp:conc:poly-b}}}$. The approximation property follows.
\end{proof}

Note that it is not necessary to add the constraints $x_i \le 2^{U(A,b)-1} -
1$ to problem \eqref{mp:conc:poly} or \eqref{mp:pwl:poly} when computing the
piecewise-linear approximation, as the modified problems are only used in
the proof of Lemma \ref{lm:poly}.

If the objective functions $\phi_i$ of problem \eqref{mp:conc:poly} are
already piecewise-linear, the resulting problem \eqref{mp:pwl:poly} is again
a piecewise-linear concave cost problem, but with each objective function
$\psi_i$ having at most the number of pieces given by bound
\eqref{eq:poly:bound}. Since this bound does not depend on the functions
$\phi_i$, and is polynomial in the input size of the feasible polyhedron $X$
and linear in $1/\epsilon$, our approach may be used to reduce the number of
pieces for piecewise-linear concave cost problems with a large number of
pieces.

When considering a specific application, it is often possible to use the
application's structure to derive values of $l_i$ and $u_i$ that yield a
significantly better bound on the number of pieces than the general values
of equation \eqref{eq:poly:lu}. We will illustrate this with two
applications in Sections \ref{sect:flow} and \ref{sect:loc}.
\subsection{Extensions}
\label{sect:poly-feas:ext}
Next, we generalize this result to polyhedra that are not contained in
$\bbR_+^n$ and concave functions that are not monotone. Consider the problem
\begin{equation}
Z^*_{\ref{mp:conc:poly-gen}} = \min\{ \phi(x) : Ax\le b\},
\label{mp:conc:poly-gen}
\end{equation}
defined by a rational polyhedron $X = \{ x : Ax \le b \}$ with at least one
vertex, and a separable concave function $\phi:Y\to \bbR_+$. Here $Y = \{ x:
Cx \le d \}$ can be any rational polyhedron that contains $X$ and has at
least one vertex. Let $\phi(x) = \sum_{i=1}^n \phi_i(x_i)$, and assume that
the functions $\phi_i$ are nonnegative. We assume that the input size of
this problem is $\size(A) + \size(b)$, and that the functions $\phi_i$ are
given by oracles that return the function value and derivative in time
$O(1)$.

Since, unlike problem \eqref{mp:conc:poly}, this problem does not include
the constraints $x \ge 0$, we need the following variant of Lemma
\ref{lm:vert-size} \cite[see e.g.][]{MR2003b:90004}. Let $V(A,b) =
4(\size(A) + \size(b))$.
\begin{lemma}
If $x'=(x'_1, \dots, x'_n)$ is a vertex of $X$, then each of its components
has $\size(x'_i) \le V(A,b)$.
\label{lm:vert-size-gen}
\end{lemma}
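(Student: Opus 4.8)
This is a classical estimate on the size of the vertices of a rational polyhedron, so the plan is to run the standard determinant argument. First I would pin down the vertex algebraically: if $x'$ is a vertex of $X=\{x:Ax\le b\}$, then among the rows of $A$ there are $n$ linearly independent ones whose corresponding inequalities are tight at $x'$; collecting these rows into a nonsingular matrix $B\in\bbQ^{n\times n}$ and the corresponding entries of $b$ into a vector $c\in\bbQ^{n}$, the point $x'$ is the unique solution of $Bx=c$. Since $X$ has a vertex, $\operatorname{rank}A=n$, so $A$ has at least $n$ rows; hence $B$ is a genuine submatrix of $A$ and $c$ a subvector of $b$, which gives $\size(B)\le\size(A)$ and $\size(c)\le\size(b)$ (the matrix- and vector-dimension terms in the definition of $\size$ only help here). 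Also, writing $B_i$ for $B$ with its $i$-th column replaced by $c$, we have $\size(B_i)\le\size(B)+\size(c)$.

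Next I would apply Cramer's rule, $x'_i=\det(B_i)/\det(B)$, and invoke the classical bound $\size(\det M)\le 2\,\size(M)$ for a rational square matrix $M$ (see, e.g., \cite{MR2003b:90004}). This yields $\size(\det B)\le 2\,\size(A)$ and $\size(\det B_i)\le 2(\size(A)+\size(b))$. Finally, representing $x'_i$ as the quotient of these two rationals and reducing to lowest terms cannot increase the sizes of numerator or denominator, so after the routine bookkeeping one gets $\size(x'_i)\le 4(\size(A)+\size(b))=V(A,b)$.

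The only genuine work is the constant-chasing in this last step; the overall structure (vertex $\Rightarrow$ nonsingular square subsystem $\Rightarrow$ Cramer's rule $\Rightarrow$ determinant size bound) is completely standard, and I expect no obstacle beyond matching the stated constant, for which I would simply defer to the textbook estimates --- equivalently, one may quote directly that a solvable rational system $Bx=c$ with $B$ square and nonsingular has a solution whose components each have size at most $4(\size(B)+\size(c))$, and combine this with $\size(B)\le\size(A)$ and $\size(c)\le\size(b)$. Note that the improvement over Lemma~\ref{lm:vert-size} comes precisely from working with $\{x:Ax\le b\}$ rather than $\{x:Ax\le b,\ x\ge 0\}$: omitting the explicit nonnegativity rows avoids inflating the constraint matrix and removes the extra $2n^{2}+3n$ term.
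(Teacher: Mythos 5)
Your proposal is correct: the paper offers no proof of this lemma at all, simply citing it as a classical result from \cite{MR2003b:90004}, and your Cramer's-rule/determinant-size argument is precisely the standard textbook proof being deferred to, with the constant chase ($\size(x'_i)\le\size(\det B_i)+\size(\det B)\le 4\size(B)+2\size(c)\le 4(\size(A)+\size(b))$) working out as claimed. Your closing remark correctly identifies why the constant here differs from the $U(A,b)$ of Lemma~\ref{lm:vert-size}.
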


We approximate this problem by applying the approach of Section
\ref{sect:gen-feas:ext} as follows. If $\proj_{x_i} Y$ is a closed interval
$[\alpha'_i, \beta'_i]$, we let $[\alpha_i, \beta_i] = [\alpha'_i,
  \beta'_i]$; if $\proj_{x_i} Y$ is a half-line $[\alpha'_i, +\infty)$ or
  $(-\infty, \beta'_i]$, we let $[\alpha_i, \beta_i] = \bigl[\alpha'_i,
  2^{V(A,b)-1} \bigr]$ or $[\alpha_i, \beta_i] = \bigl[-2^{V(A,b)-1},
  \beta'_i \bigr]$; and if the projection is the entire real line, we let
$[\alpha_i, \beta_i] = \bigl[-2^{V(A,b)}, 2^{V(A,b)} \bigr]$.

If $\proj_{x_i} Y$ is a closed interval or a half-line, we take
\begin{equation}
\label{eq:poly-gen:lu}
l_i = \frac{1}{2^{V(A,b) + V(C,d) - 1} - 1} 
\quad\text{and}\quad
u_i = 2^{V(A,b)-1} + 2^{V(C,d) - 1} - 1,
\end{equation}
while if $\proj_{x_i} Y$ is the entire real line, we take $l_i=
2^{V(A,b)-1}$ and $u_i = 3\cdot 2^{V(A,b)-1}$. We then apply the approach of
Section \ref{sect:gen-feas:ext} as described from Assumption \ref{assum:gen-x}
onward, obtaining the piecewise-linear cost problem
\begin{equation}
Z^*_{\ref{mp:pwl:poly-gen}} =\min\{ \psi(x) : Ax \le b\}.
\label{mp:pwl:poly-gen}
\end{equation}

The number of pieces used to approximate each function $\phi_i$ is at most
\begin{equation}
\label{eq:bnd:poly-gen}
\begin{split}
4 + 2 \Bigl\lceil \log_{1 + 4\epsilon + 4\epsilon^2} \frac{u_i}{l_i} 
\Bigr\rceil 
&\le 4 + 2 \bigl\lceil \log_{1 + 4\epsilon + 4\epsilon^2} 
\bigl( 2^{V(A,b) + V(C,d)} \bigl( 2^{V(A,b)} + 2^{V(C,d)} \bigr) \bigr) 
\bigr\rceil \\
&\le 4 + 2 \bigl\lceil \log_{1 + 4\epsilon + 4\epsilon^2} 
\bigl( 2^{V(A,b) + V(C,d)} 2^{V(A,b) + V(C,d)} \bigr) 
\bigr\rceil \\
&= 4 + 2\bigg\lceil \frac{2V(A,b) + 2V(C,d)}
{\log_2(1 + 4\epsilon + 4\epsilon^2)} \biggr\rceil. 
\end{split}
\end{equation}
As $\epsilon \to 0$, this bound behaves as $\frac{V(A,b) + V(C,d)}{ (\log_2
  e) \epsilon}$. Note that, in addition to the size of the input and
$1/\epsilon$, this bound also depends on the size of $C$ and $d$. Moreover,
to analyze the time needed to compute the piecewise-linear approximation, we
have to specify a way to compute the quantities $\alpha'_i$ and $\beta'_i$.
We will return to these issues shortly.

Next, we prove that problem \eqref{mp:pwl:poly-gen} approximates problem
\eqref{mp:conc:poly-gen} to within a factor of $1+\epsilon$, by applying
Lemma \ref{lm:gen-x}.
\begin{theorem}
$Z^*_{\ref{mp:conc:poly-gen}} \le Z^*_{\ref{mp:pwl:poly-gen}} 
\le (1 + \epsilon) Z^*_{\ref{mp:conc:poly-gen}}$.
\end{theorem}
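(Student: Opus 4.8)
The plan is to reduce the general theorem to Lemma \ref{lm:gen-x}, mirroring the structure of the proof of Lemma \ref{lm:poly} in the monotone case. The inequality $Z^*_{\ref{mp:conc:poly-gen}} \le Z^*_{\ref{mp:pwl:poly-gen}}$ is immediate, since each tangent line lies above the graph of the corresponding $\phi_i$, so $\psi \ge \phi$ pointwise on $Y \supseteq X$. The work is in the upper bound, and for that we must verify that problem \eqref{mp:conc:poly-gen}, augmented with the bounds $\alpha_i, \beta_i, l_i, u_i$ defined just before the theorem, satisfies Assumption \ref{assum:gen-x}, and that the feasible set can be taken compact.

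First I would argue that \eqref{mp:conc:poly-gen} has a vertex-optimal solution: since $\phi$ is concave and nonnegative on the polyhedron $Y \supseteq X$, and $X$ has at least one vertex (hence contains no line), a concave function attains its minimum over $X$ at a vertex $x^*$ of $X$ (again by \cite{MR0131816}; if $X$ is unbounded one intersects with the box $|x_i| \le 2^{V(A,b)-1}$ exactly as in Lemma \ref{lm:poly}, noting that this box contains all vertices of $X$ by Lemma \ref{lm:vert-size-gen} and that the augmented polyhedron $X_\rmB$ still has $x^*$ optimal for \eqref{mp:conc:poly-gen} and any vertex optimum of \eqref{mp:pwl:poly-gen} inside it, so both optimal values are unchanged). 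Then I would check the component bounds of Assumption \ref{assum:gen-x}. By Lemma \ref{lm:vert-size-gen}, $\size(x^*_i) \le V(A,b)$, so $|x^*_i| \le 2^{V(A,b)-1} - 1$ and, when $x^*_i \ne 0$, also $|x^*_i| \ge 1/(2^{V(A,b)-1} - 1)$. The endpoints $\alpha'_i, \beta'_i$ of $\proj_{x_i} Y$, being coordinates of vertices of $Y$ (or the artificial box values), have $\size \le V(C,d)$, hence $|\alpha_i|, |\beta_i| \le 2^{V(C,d)-1} - 1$ (or the box bound). Combining these size estimates, one shows $x^*_i \in \{\alpha_i, \beta_i\} \cup \bigl( [\alpha_i + l_i, \alpha_i + u_i] \cap [\beta_i - u_i, \beta_i - l_i] \bigr)$: if $x^*_i$ is neither $\alpha_i$ nor $\beta_i$, then $x^*_i - \alpha_i$ and $\beta_i - x^*_i$ are both nonzero differences of rationals of controlled size, so each is at least $l_i$ in absolute value and at most $u_i$; the choices $l_i = 1/(2^{V(A,b)+V(C,d)-1}-1)$ and $u_i = 2^{V(A,b)-1} + 2^{V(C,d)-1} - 1$ are exactly calibrated so these bounds hold (the real-line case uses the box-based $l_i = 2^{V(A,b)-1}$, $u_i = 3\cdot 2^{V(A,b)-1}$ and is easier). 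Thus Assumption \ref{assum:gen-x} is met.

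Once the assumptions are verified, the piecewise-linear function $\psi$ we constructed is precisely the one produced by the recipe of Section \ref{sect:gen-feas:ext} applied to problem \eqref{mp:conc:poly-gen} with these $\alpha_i, \beta_i, l_i, u_i$ — note that replacing $l_i, u_i$ by the restricted $l'_i, u'_i$ of \eqref{eq:gen-x:lu} only shrinks the relevant interval, so the tangent construction and its guarantee are unaffected. Therefore Lemma \ref{lm:gen-x} applies directly (to \eqref{mp:conc:poly-gen}, or to its box-restricted version when $X$ is unbounded, using the argument above that optimal values coincide), giving $Z^*_{\ref{mp:conc:poly-gen}} \le Z^*_{\ref{mp:pwl:poly-gen}} \le (1+\epsilon) Z^*_{\ref{mp:conc:poly-gen}}$.

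The main obstacle I anticipate is not conceptual but bookkeeping: one must carefully track the size bounds on the coordinates of vertices of $X$ versus vertices of $Y$, correctly handle the three cases for $\proj_{x_i} Y$ (compact interval, half-line, full line) with their differing definitions of $l_i, u_i$, and confirm that in every case the compactness reduction (intersecting with a box whose corners have size $\le V(A,b)$) leaves both optimal values invariant — the subtlety being that the box must contain $x^*$ and that a vertex optimum of the piecewise-linear problem also lies in it, which follows because all vertices of $X$ obey the size bound. None of these steps is deep, but the interplay of $A,b$ with $C,d$ and the artificial bounds requires care to state precisely, which is presumably why the bound \eqref{eq:bnd:poly-gen} also depends on $\size(C) + \size(d)$.
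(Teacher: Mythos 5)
Your proposal is correct and follows essentially the same route as the paper: establish a vertex-optimal solution, intersect with a box of side determined by $V(A,b)$ to get compactness without changing either optimal value, verify Assumption \ref{assum:gen-x} by the three-case analysis on $\proj_{x_i} Y$ using the size bounds from Lemma \ref{lm:vert-size-gen} (the ``nonzero difference of rationals with controlled denominators is at least $l_i$'' step is exactly the paper's $\frac{p_1 q_2 - p_2 q_1}{q_1 q_2}$ computation), and then invoke Lemma \ref{lm:gen-x}. The only detail the paper makes more explicit is carrying out the $\alpha_i$-side and $\beta_i$-side membership arguments separately and intersecting them, which you gesture at correctly.
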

\begin{proof}
The assumptions needed by Lemma \ref{lm:gen-x} are satisfied, except for
Assumption \ref{assum:gen-x} and the requirement that $X$ be a compact set.
We address these two assumptions as follows.

First, note that problem \eqref{mp:conc:poly-gen} has an optimal solution at
a vertex $x^*$ of $X$, since $X$ is a polyhedron with at least one vertex,
and $\phi$ is concave and nonnegative \cite{MR0131816}. By Lemma
\ref{lm:vert-size-gen}, we have $\size(x^*_i) \le V(A,b)$ for $i\in [n]$,
and hence $x^*_i \in \bigl[ -2^{V(A,b) - 1} + 1, 2^{V(A,b)-1} - 1 \bigr]$.
We add the constraints $-2^{V(A,b) - 1} + 1 \le x_i \le 2^{V(A,b)-1} - 1$
for $i\in [n]$ to $X$, obtaining the polyhedron $X_\rmB$ and the problems
\begin{align}
Z^*_{\ref{mp:conc:poly-gen}\rmB} &= \min \bigl\{ \phi(x) : Ax \le b,
-2^{V(A,b) - 1} + 1 \le x \le 2^{V(A,b)-1} - 1 \bigr\},
\tag{\ref{mp:conc:poly-gen}B} \label{mp:conc:poly-gen-b}\\
Z^*_{\ref{mp:pwl:poly-gen}\rmB} &=\min \bigl\{ \psi(x) : Ax \le b,
-2^{V(A,b) - 1} + 1 \le x \le 2^{V(A,b)-1} - 1 \bigr\}.
\tag{\ref{mp:pwl:poly-gen}B} \label{mp:pwl:poly-gen-b}
\end{align}
It is easy to see that $Z^*_{\mathrm{\ref{mp:conc:poly-gen-b}}} =
Z^*_{\ref{mp:conc:poly-gen}}$ and $Z^*_{\mathrm{\ref{mp:pwl:poly-gen-b}}} =
Z^*_{\ref{mp:pwl:poly-gen}}$, and that $x^*$ is an optimal solution to
problem (\ref{mp:conc:poly-gen-b}).

Clearly, $X_\rmB$ is a compact set. To see that Assumption \ref{assum:gen-x}
is satisfied for problem (\ref{mp:conc:poly-gen-b}), consider the following
three cases:
\begin{enumerate}[label=\arabic*)]
\item
If $\proj_{x_i} Y = (-\infty, +\infty)$, then $\alpha_i = -2^{V(A,b)}$, $l_i
= 2^{V(A,b)-1}$, and $u_i = 3\cdot 2^{V(A,b)-1}$. As a result, $x^*_i -
\alpha_i \in \bigl[-2^{V(A,b)-1} + 1 + 2^{V(A,b)}, 2^{V(A,b)-1} - 1 +
  2^{V(A,b)} \bigr] \subseteq \{ 0 \} \cup [l_i, u_i]$.
\item
If $\proj_{x_i} Y = (-\infty, \beta'_i]$, then $\alpha_i = -2^{V(A,b)-1}$,
and thus $x^*_i - \alpha_i \ge 1$. On the other hand, $\beta_i = \beta'_i$,
implying that $\beta_i$ is a component of a vertex of $Y$, and thus
$\size(\beta_i) \le V(C,d)$. Now, $x^*_i \le \beta_i$ implies that $x^*_i -
\alpha_i \le 2^{V(C,d) -1} - 1 + 2^{V(A,b) - 1}$. Since $l_i =
\frac{1}{2^{V(A,b) + V(C,d) - 1} - 1}$ and $u_i = 2^{V(A,b)-1} + 2^{V(C,d) -
  1} - 1$, we have $x^*_i - \alpha_i \in \{ 0 \} \cup [l_i, u_i]$.
\item
If $\proj_{x_i} Y = [\alpha'_i, +\infty)$ or $\proj_{x_i} Y = [\alpha'_i,
    \beta'_i]$, then let $x^*_i = \frac{p_1}{q_1}$ with $q_1 > 0$, and $p_1$
  and $q_1$ coprime integers. Similarly let $\alpha_i = \frac{p_2}{q_2}$,
  and note that $x^*_i - \alpha_i = \frac{p_1 q_2 - p_2 q_1}{q_1 q_2}$.
  Since $\alpha_i = \alpha'_i$, we know that $\alpha_i$ is a component of a
  vertex of $Y$, and hence $\size(\alpha_i) \le V(C,d)$ and $\size(q_2) \le
  V(C,d)$. On the other hand, $\size(x^*_i) \le V(A,b)$, and thus
  $\size(q_1) \le V(A,b)$. This implies that $\size(q_1 q_2) \le V(A,b) +
  V(C,d)$, and therefore either $x^*_i = \alpha_i$ or $x^*_i - \alpha_i \ge
  \frac{1}{2^{V(A,b) + V(C,d) - 1} - 1}$. Next, since $\size(\alpha_i) \le
  V(C,d)$ and $\size(x^*_i) \le V(A,b)$, we have $x^*_i - \alpha_i \le
  2^{V(A,b)-1} + 2^{V(C,d)-1} - 2$. Given that $l_i = \frac{1}{2^{V(A,b) +
      V(C,d) - 1} - 1}$ and $u_i = 2^{V(A,b)-1} + 2^{V(C,d) - 1} - 1$, it
  follows that $x^*_i - \alpha_i \in \{ 0 \} \cup [l_i, u_i]$.

\end{enumerate}
Combining the three cases, we obtain $x^*_i \in \{ \alpha_i \} \cup
[\alpha_i + l_i, \alpha_i + u_i]$. Similarly, we can show that $x^*_i \in \{
\beta_i \} \cup [\beta_i - u_i, \beta_i - l_i]$, and therefore $x^*_i \in
(\{ \alpha_i \} \cup [\alpha_i + l_i, \alpha_i + u_i]) \cap (\{ \beta_i \}
\cup [\beta_i - u_i, \beta_i - l_i])$, which is a subset of $\{ \alpha_i,
\beta_i \} \cup \bigl( [\alpha_i + l_i, \alpha_i + u_i] \cap [\beta_i - u_i,
  \beta_i - l_i] \bigr)$.

Therefore, problem (\ref{mp:conc:poly-gen-b}), together with the quantities
$\alpha_i$ and $\beta_i$, the bounds $l_i$ and $u_i$, and the optimal
solution $x^*$ satisfies Assumption \ref{assum:gen-x}, and Lemma
\ref{lm:gen-x} applies. Using the approach of Section
\ref{sect:gen-feas:ext} to approximate problem (\ref{mp:conc:poly-gen-b})
yields problem (\ref{mp:pwl:poly-gen-b}), which implies that
$Z^*_{\mathrm{\ref{mp:conc:poly-gen-b}}} \le
Z^*_{\mathrm{\ref{mp:pwl:poly-gen-b}}} \le (1+\epsilon)
Z^*_{\mathrm{\ref{mp:conc:poly-gen-b}}}$, and the approximation property
follows.
\end{proof}
To obtain a bound on the number of pieces that is polynomial in the size of
the input and linear in $1/\epsilon$, we can simply restrict the domain $Y$
of the objective function to the feasible polyhedron $X$, that is let $Y :=
X$. In this case, bound \eqref{eq:bnd:poly-gen} becomes $4 + 2\big\lceil
\frac{4V(A,b)} {\log_2(1 + 4\epsilon + 4\epsilon^2)} \bigr\rceil$, and can
be further improved to $4 + 2\bigl\lceil \frac{3V(A,b)} {\log_2(1 +
  4\epsilon + 4\epsilon^2)} \bigr\rceil$, which behaves as $\frac{1.5
  V(A,b)}{(\log_2 e) \epsilon}$ as $\epsilon \to 0$.

When $Y=X$, the time needed to compute the piecewise-linear approximation is
also polynomial in the size of the input and linear in $1/\epsilon$. The
quantities $\alpha'_i$ and $\beta'_i$ can be computed by solving the linear
programs $\min \{ x_i : Ax \le b\}$ and $\max \{ x_i : Ax \le b\}$. Recall
that this can be done in polynomial time, for example by the ellipsoid
method \cite[see e.g.][]{MR95e:90001,MR2003b:90004}, and denote the time
needed to solve such a linear program by $T_{\mathrm{LP}}(A,b)$. After
computing the quantities $\alpha'_i$ and $\beta'_i$, we can compute all the
quantities $\alpha_i$ and $\beta_i$ in $O(V(A,b))$, all the bounds $l_i$ and
$u_i$ in $O(V(A,b))$, and the pieces composing each function $\phi_i$ in
$O\bigl( \frac{V(A,b)}{\log_2(1 + 4\epsilon + 4\epsilon^2)} \bigr)$ per
function. The total running time is therefore $O\bigl( nT_{\mathrm{LP}}(A,b)
+ V(A,b) + \frac{nV(A,b)}{\log_2(1 + 4\epsilon + 4\epsilon^2)} \bigr) =
O\bigl( nT_{\mathrm{LP}}(A,b) + \frac{nV(A,b)}{\epsilon} \bigr)$.

In many applications, the domain $Y$ of the objective function has a very
simple structure and the quantities $\alpha'_i$ and $\beta'_i$ are included
in the input, as part of the description of the objective function. In this
case, using bound \eqref{eq:bnd:poly-gen} directly may yield significant
advantages over the approach that lets $Y := X$ and solves $2n$ linear
programs. Bound \eqref{eq:bnd:poly-gen} can be improved to $4 + 2\big\lceil
\frac{2V(A,b) + 2\size(\alpha'_i) + 2\size(\beta'_i)} {\log_2(1 + 4\epsilon
  + 4\epsilon^2)} \bigr\rceil$, and as $\epsilon \to 0$ it behaves as
$\frac{V(A,b) + \size(\alpha'_i) + \size(\beta'_i)}{(\log_2 e) \epsilon}$.
Since $\alpha'_i$ and $\beta'_i$ are part of the input, the improved bound
is again polynomial in the size of the input and linear in $1/\epsilon$.
\section{Algorithms for Concave Cost Problems}
\label{sect:algor}
Although concave cost problem \eqref{mp:conc:poly} can be approximated
efficiently by piecewise-linear cost problem \eqref{mp:pwl:poly}, both the
original and the resulting problems contain the set cover problem as a
special case, and therefore are NP-hard. Moreover, the set cover problem
does not have an approximation algorithm with a certain logarithmic factor,
unless $\rmP = \rmNP$ \cite{MR1715654}. Therefore, assuming that $\rmP \neq
\rmNP$, we cannot develop a polynomial-time exact algorithm or constant
factor approximation algorithm for problem \eqref{mp:pwl:poly} in the
general case, and then use it to approximately solve problem
\eqref{mp:conc:poly}. In this section, we show how to use our
piecewise-linear approximation approach to obtain new algorithms for concave
cost problems.

We begin by writing problem \eqref{mp:pwl:poly} as an integer program.
Several classical methods for representing a piecewise-linear function as
part of an integer program introduce a binary variable for each piece and
add one or more coupling constraints to ensure that any feasible solution
uses at most one piece \cite[see e.g.][]{MR2000c:90001,Croxton:2003:CMP}.
However, since the objective function of problem \eqref{mp:pwl:poly} is also
concave, the coupling constraints are unnecessary, and we can employ the
following fixed charge formulation. This formulation has been known since at
least the 1960s \cite[e.g.][]{Feldman:1966:WLC}.
\begin{subequations}
\begin{align}
\min\ &\sum_{i=1}^n \sum_{p=0}^{P_i} 
\left(f_i^p z_i^p + s_i^p y_i^p\right),\\
\text{s.t.}\ &Ax \le b, \\
& x_i = \sum_{p=0}^{P_i} y_i^p,  &i\in [n], \\
& 0\le y_i^p \le B_i z_i^p, &i\in [n], p\in \{0,\dots,P_i\}, \\
& z_i^p \in \{0,1\}, &i\in [n], p\in \{0,\dots,P_i\}.
\end{align}
\label{mp:ip:poly}
\end{subequations}
%
\hskip-1ex 
Here, we assume without loss of generality that $\psi_i(0)=0$.
The coefficients $B_i$ are chosen so that $x_i \le B_i$ at any vertex of the
feasible polyhedron $X$ of problem \eqref{mp:pwl:poly}, for instance $B_i =
2^{U(A,b)-1} - 1$.

A key advantage of formulation \eqref{mp:ip:poly} is that, in many cases, it
preserves the special structure of the original concave cost problem. For
example, when \eqref{mp:conc:poly} is the concave cost multicommodity flow
problem, \eqref{mp:ip:poly} becomes the fixed charge multicommodity flow
problem, and when \eqref{mp:conc:poly} is the concave cost facility location
problem, \eqref{mp:ip:poly} becomes the classical facility location problem.
In such cases, \eqref{mp:ip:poly} is a well-studied discrete optimization
problem and may have a polynomial-time exact algorithm, fully
polynomial-time approximation scheme (FPTAS), polynomial-time approximation
scheme (PTAS), approximation algorithm, or polynomial-time heuristic.

Let $\gamma \ge 1$. The next lemma follows directly from Lemma
\ref{lm:poly}.
\begin{lemma}
Let $x'$ be a $\gamma$-approximate solution to problem \eqref{mp:pwl:poly},
that is $x'\in X$ and $Z^*_{\ref{mp:pwl:poly}} \le \psi(x') \le \gamma
Z^*_{\ref{mp:pwl:poly}}$. Then $x'$ is also a $(1+\epsilon)\gamma$
approximate solution to problem \eqref{mp:conc:poly}, that is
$Z^*_{\ref{mp:conc:poly}} \le \phi(x') \le (1+\epsilon) \gamma
Z^*_{\ref{mp:conc:poly}}$.
\end{lemma}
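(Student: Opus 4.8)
The plan is to prove the lemma by chaining three facts: the $\gamma$-approximation hypothesis on $x'$, a pointwise domination $\phi \le \psi$ on the feasible set, and the approximation guarantee already established in Lemma~\ref{lm:poly}. No new idea is needed, so this will be a short argument. First I would dispose of the lower bound: since $x' \in X$ and $Z^*_{\ref{mp:conc:poly}}$ is by definition the minimum of $\phi$ over $X$, we have $Z^*_{\ref{mp:conc:poly}} \le \phi(x')$ immediately.

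Next I would establish the pointwise inequality $\phi(x) \le \psi(x)$ for every $x \in \bbR^n_+$, and in particular for $x = x'$. This is exactly the observation used at the start of the proof of Lemma~\ref{lm:gen}: for each $i\in[n]$ and each $p\in\{0,\dots,P_i\}$, the line $f_i^p + s_i^p x_i$ is tangent from above to the graph of the concave function $\phi_i$, so for $x_i > 0$ we get $\phi_i(x_i) \le \min\{ f_i^p + s_i^p x_i : p = 0, \dots, P_i \} = \psi_i(x_i)$, while for $x_i = 0$ we have $\psi_i(0) = \phi_i(0)$ by definition of $\psi_i$. Summing over $i$ yields $\phi(x') \le \psi(x')$.

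Finally I would chain the inequalities: using the pointwise bound, then the hypothesis $\psi(x') \le \gamma Z^*_{\ref{mp:pwl:poly}}$, and then the right-hand inequality of Lemma~\ref{lm:poly}, namely $Z^*_{\ref{mp:pwl:poly}} \le (1+\epsilon) Z^*_{\ref{mp:conc:poly}}$, we obtain
\[
\phi(x') \;\le\; \psi(x') \;\le\; \gamma\, Z^*_{\ref{mp:pwl:poly}} \;\le\; (1+\epsilon)\gamma\, Z^*_{\ref{mp:conc:poly}} .
\]
Combined with the lower bound, this gives $Z^*_{\ref{mp:conc:poly}} \le \phi(x') \le (1+\epsilon)\gamma\, Z^*_{\ref{mp:conc:poly}}$, as claimed. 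I do not expect any genuine obstacle here; the only step that warrants an explicit sentence is the pointwise domination $\phi \le \psi$, and even that is already implicit in the proof of Lemma~\ref{lm:gen}.
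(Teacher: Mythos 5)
Your proof is correct and matches the paper's intent: the paper states only that the lemma ``follows directly from Lemma \ref{lm:poly},'' and your chain of inequalities (feasibility for the lower bound, the pointwise domination $\phi \le \psi$ from the tangent-line construction, then the hypothesis on $x'$ and the right-hand inequality of Lemma \ref{lm:poly}) is exactly the argument being alluded to.
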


Therefore, a $\gamma$-approximation algorithm for the resulting discrete
optimization problem yields a $(1+\epsilon)\gamma$ approximation algorithm
for the original concave cost problem. More specifically, we compute a
$1+\epsilon$ piecewise-linear approximation of the concave cost problem; the
time needed for the computation and the input size of the resulting problem
are both bounded by $O\bigl( \frac{n U(A,b)}{\epsilon} \bigr)$. Then, we run
the $\gamma$-approximation algorithm on the resulting problem. The following
table summarizes the results for other types of algorithms.

\begin{center}
\renewcommand{\arraystretch}{1.25}
\begin{tabular}{ll}
\parbox[][6ex][c]{0.4\textwidth}{When the resulting discrete \\
optimization problem has a ...}  &
\parbox[][6ex][c]{0.35\textwidth}{We can obtain for the original \\
concave cost problem a ...} \\
\hline
\hline
Polynomial-time exact algorithm & 
FPTAS \\
FPTAS & 
FPTAS \\
PTAS &
PTAS \\
$\gamma$-approximation algorithm &
$(1+\epsilon)\gamma$ approximation algorithm \\
Polynomial-time heuristic &
Polynomial-time heuristic \\
\hline
\hline
\end{tabular}
\end{center}
In conclusion, we note that the results in this section can be adapted to
the more general problems \eqref{mp:conc:poly-gen} and
\eqref{mp:pwl:poly-gen}.
\section{Concave Cost Multicommodity Flow}
\label{sect:flow}
To illustrate our approach on a practical problem, we consider the concave
cost multicommodity flow problem. Let $(V,E)$ be an undirected network with
node set $V$ and edge set $E$, and let $n = |V|$ and $m = |E|$. This network
has $K$ commodities flowing on it, with the supply or demand of commodity
$k$ at node $i$ being $b^k_i$. If $b^k_i > 0$ then node $i$ is a source for
commodity $k$, while $b^k_i < 0$ indicates a sink. We assume that each
commodity has one source and one sink, that the supply and demand for each
commodity are balanced, and that the network is connected.

Each edge $\{i,j\} \in E$ has an associated nondecreasing concave cost
function $\phi_{ij} : \mathbb{R}_+ \to \mathbb{R}_+$. Without loss of
generality, we let $\phi_{ij}(0) = 0$ for $\{i,j\} \in E$. For an edge $\{i,
j\} \in E$, let $x^k_{ij}$ indicate the flow of commodity $k$ from $i$ to
$j$, and $x^k_{ji}$ the flow in the opposite direction. The cost on edge
$\{i,j\}$ is a function of the total flow of all commodities on it, namely
$\phi_{ij}\bigl( \sum_{k=1}^K (x^k_{ij} + x^k_{ji}) \bigr)$. The goal is to
route the flow of each commodity so as to satisfy all supply and demand
constraints, while minimizing total cost.

A mathematical programming formulation for this problem is given by:
\begin{subequations}
\label{mp:conc:flow}
\begin{align}
Z^*_{\ref{mp:conc:flow}} = 
\min\ &\sum_{\{i,j\} \in E} \phi_{ij}\left(\sum_{k=1}^K 
(x_{ij}^k+x_{ji}^k) \right),\\
\text{s.t.}\ &\sum_{\{i, j\}\in E} x_{ij}^k - 
\sum_{\{j,i\} \in E} x_{ji}^k = b_i^k, &i\in V, k\in [K],\\
&x_{ij}^k,x_{ji}^k \ge 0, &\{i,j\} \in E, k\in [K].
\end{align}
\end{subequations}
Let $B^k = \sum_{i: b_i^k>0} b_i^k$ and $B = \sum_{k=1}^K B^k$. For
simplicity, we assume that the coefficients $b^k_i$ are integral.

A survey on concave cost network flows and their applications is available
in \cite{MR91k:90211}. Concave cost multicommodity flow is also known as the
buy-at-bulk network design problem \cite[e.g][]{MR2181615,Chekuri:2006:AAN}.
Concave cost multicommodity flow has the Steiner tree problem as a special
case, and therefore is NP-hard, and does not have a polynomial-time
approximation scheme, unless $\rmP = \rmNP$ \cite{MR1015826,MR1639346}.
Moreover, concave cost multicommodity flow does not have an
$O\bigl(\log^{1/2 - \epsilon'} n \bigr)$ approximation algorithm for
$\epsilon'$ arbitrarily close to $0$, unless $\rmNP \subseteq \ZTIME\bigl(
\eta^{\mathrm{polylog} \eta} \bigr)$ \cite{Andrews:2004:HBN}.

Problem \eqref{mp:conc:flow} satisfies the assumptions needed by Lemma
\ref{lm:poly}, since we can handle the cost functions $\phi_{ij}\bigl(
\sum_{k=1}^K (x^k_{ij} + x^k_{ji}) \bigr)$ by introducing new variables
$\xi_{ij} = \sum_{k=1}^K (x^k_{ij} + x^k_{ji})$ for $\{i,j\} \in E$. We
apply the approach of equations \eqref{eq:poly:lu}--\eqref{eq:poly:bound} to
approximate this problem to within a factor of $1+\epsilon$, use formulation
\eqref{mp:ip:poly} to write the resulting problem as an integer program, and
disaggregate the integer program, obtaining:
\begin{subequations}
\label{mp:ip:flow}
\begin{align}
Z^*_{\ref{mp:ip:flow}} = 
\min\ &\sum_{\{i,j,p\}\in E'} f_{ijp} z_{ijp} + 
\sum_{\{i,j,p\}\in E'} \sum_{k=1}^K s_{ijp} (x_{ijp}^k+x_{jip}^k),\\ 
\text{s.t.}\ &\sum_{\{i,j,p\}\in E'} x_{ijp}^k -
\sum_{\{j,i,p\} \in E'} x_{jip}^k = b_i^k, \hskip 5em i\in V, k\in [K],\\ 
&0 \le x_{ijp}^k,x_{jip}^k \le B^k y_{ijp}, \hskip 8em \{i,j,p\}\in E', 
k\in [K],\\ 
&y_{ijp}\in \{0,1\}, \hskip 14.25em \{i,j,p\}\in E'.
\end{align}
\end{subequations}
This is the well-known fixed charge multicommodity flow problem, but on a
new network $(V, E')$ with $(P+1)m$ edges, for a suitably defined $P$. Each
edge $\{i,j\}$ in the old network corresponds to $P+1$ parallel edges
$\{i,j,p\}$ in the new network, with $p$ being an index to distinguish
between parallel edges. For each edge $\{i,j,p\}\in E'$, the coefficient
$f_{ijp}$ can be interpreted as its installation cost, and $s_{ijp}$ as the
unit cost of routing flow on the edge once installed. The binary variable
$y_{ijp}$ indicates whether edge $\{i,j,p\}$ is installed.

For a survey on fixed charge multicommodity flow, see
\cite{MR1420866,MR99j:90001-i}. This problem is also known as the
uncapacitated network design problem. The above hardness results from
\cite{MR1015826,MR1639346} and \cite{Andrews:2004:HBN} also apply to fixed
charge multicommodity flow.

By bound \eqref{eq:poly:bound}, $P \le \bigl\lceil \frac{2U(A,b')}{\log_2 (1
  + 4\epsilon + 4\epsilon^2)} \bigr\rceil$, with $A$ and $b'$ being the
constraint matrix and right-hand side vector of problem
\eqref{mp:conc:flow}. However, we can obtain a much lower value of $P$ by
taking problem structure into account. Specifically, we perform the
approximation with $l_i = 1$ and $u_i = B$, which results in $P \le
\bigl\lceil \frac{\log_2 B}{\log_2 (1 + 4\epsilon + 4\epsilon^2)}
\bigr\rceil$.

\begin{lemma}
$Z^*_{\ref{mp:conc:flow}} \le Z^*_{\ref{mp:ip:flow}} \le (1+\epsilon)
  Z^*_{\ref{mp:conc:flow}}$.
\label{lm:flow}
\end{lemma}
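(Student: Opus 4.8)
The plan is to reduce Lemma \ref{lm:flow} to Lemma \ref{lm:poly} by checking that the choice $l_{ij} = 1$, $u_{ij} = B$ is valid for problem \eqref{mp:conc:flow} when rewritten with the auxiliary flow-aggregation variables $\xi_{ij} = \sum_{k=1}^K (x_{ij}^k + x_{ji}^k)$. First I would make the reformulation explicit: introducing the variables $\xi_{ij}$ for $\{i,j\}\in E$ together with the defining equations $\xi_{ij} = \sum_k (x_{ij}^k + x_{ji}^k)$ turns \eqref{mp:conc:flow} into a problem of the form \eqref{mp:conc:poly}, with a nonnegative separable concave objective $\sum_{\{i,j\}\in E}\phi_{ij}(\xi_{ij})$ (the cost depends only on the $\xi_{ij}$ coordinates, and the objective is trivially constant, hence concave, in the remaining coordinates). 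So Lemma \ref{lm:poly} already gives a $1+\epsilon$ piecewise-linear approximation; the content of Lemma \ref{lm:flow} is that the structure-based bounds $l_{ij}=1$, $u_{ij}=B$ suffice, and that after applying formulation \eqref{mp:ip:poly} and disaggregating we arrive exactly at \eqref{mp:ip:flow}.

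The key step is to verify Assumption \ref{assum:gen} with these particular bounds. By \cite{MR0131816}, the reformulated problem attains its optimum at a vertex of its feasible polyhedron; at such a vertex the flow decomposes into paths and cycles, and since the $b_i^k$ are integral, each $\xi_{ij}$ is a nonnegative integer. Hence $\xi_{ij} \in \{0\} \cup [1, u_{ij}]$ provided $\xi_{ij} \le B$ always holds. That upper bound follows because the total flow on any single edge cannot exceed the total supply $B = \sum_k B^k = \sum_k \sum_{i: b_i^k > 0} b_i^k$ in the network: in a path-and-cycle decomposition of an optimal (vertex) solution there are no flow-carrying cycles, so every unit of flow on edge $\{i,j\}$ belongs to a path from some source to some sink, and the number of such units is at most $B$. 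Therefore Assumption \ref{assum:gen} holds with $l_{ij}=1$, $u_{ij}=B$, and Lemma \ref{lm:poly} yields $Z^*_{\ref{mp:conc:flow}} \le Z^*_{\ref{mp:pwl:flow}} \le (1+\epsilon) Z^*_{\ref{mp:conc:flow}}$ for the piecewise-linear version.

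Finally I would argue that applying formulation \eqref{mp:ip:poly} to the piecewise-linear problem and then disaggregating the piece variables across commodities produces precisely \eqref{mp:ip:flow}: the single per-piece flow variable $y_{ij}^p$ of \eqref{mp:ip:poly} on edge $\{i,j\}$ is split into $x_{ijp}^k$ and $x_{jip}^k$ with $\sum_k (x_{ijp}^k + x_{jip}^k) = y_{ij}^p$, the flow-conservation constraints of \eqref{mp:conc:flow} carry over to the new network $(V,E')$, and the fixed-charge bound $0 \le y_{ij}^p \le B_{ij} z_{ij}^p$ becomes $0 \le x_{ijp}^k, x_{jip}^k \le B^k y_{ijp}$. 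Since disaggregation is an exact reformulation that changes neither the feasible set nor the objective value, $Z^*_{\ref{mp:ip:flow}} = Z^*_{\ref{mp:pwl:flow}}$, which combined with the previous inequality completes the proof. The main obstacle is the second step — making the vertex/path-decomposition argument for the bound $\xi_{ij}\le B$ clean, and being careful that the reformulation with the $\xi_{ij}$ variables really does preserve the "vertex optimal solution" property in the form needed by Lemma \ref{lm:poly}; everything else is bookkeeping.
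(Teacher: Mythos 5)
Your proposal is correct and follows essentially the same route as the paper: take a vertex optimal solution (which exists by concavity and nonnegativity of the objective), use the acyclic/tree structure of each commodity's flow at a vertex together with integrality of the $b_i^k$ to conclude that the aggregate edge flow lies in $\{0\}\cup[1,B]$, and then invoke the general approximation guarantee (Theorem \ref{th:gen-tight}). The paper's proof is just a terser version of yours, omitting the explicit $\xi_{ij}$ reformulation and the disaggregation bookkeeping that you spell out.
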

\begin{proof}
Since the objective is concave and nonnegative, problem (\ref{mp:conc:flow})
has an optimal solution at a vertex $z$ of its feasible polyhedron
\cite[][]{MR0131816}. In $z$, the flow of each commodity occurs on a tree
\cite[see e.g.][]{MR1956924}, and therefore, the total flow $\sum_{k=1}^K
(z^k_{ij} + z^k_{ji})$ on any edge $\{i,j\} \in E$ is in $\{ 0 \} \cup
     [1,B]$. The approximation result follows from Theorem
     \ref{th:gen-tight}.
\end{proof}
\subsection{Computational Results}
\label{sect:flow:comput}
We present computational results for problems with complete uniform
demand---there is a commodity for every ordered pair of nodes, and every
commodity has a demand of 1. We have generated the instances based on
\cite{MR90h:90061} as follows. To ensure feasibility, for each problem we
first generated a random spanning tree. Then we added the desired number of
edges between nodes selected uniformly at random. For each number of nodes,
we considered a dense network with $n(n-1)/4$ edges (rounded down to the
nearest multiple of 5), and a sparse network with $3n$ edges. For each
network thus generated, we have considered two cost structures.

The first cost structure models moderate economies of scale. We assigned to
each edge $\{i,j\}\in E$ a cost function of the form $\phi_{ij}(\xi_{ij}) =
a+b (\xi_{ij})^c$, with $a,b$, and $c$ randomly generated from uniform
distributions over $[0.1,10], [0.33, 33.4]$, and $[0.8,0.99]$. For an
average cost function from this family, the marginal cost decreases by
approximately 30\% as the flow on an edge increases from 25 to 1,000. The
second cost structure models strong economies of scale. The cost functions
are as in the first case, except that $c$ is sampled from a uniform
distribution over $[0.0099,0.99]$. In this case, for an average cost
function, the marginal cost decreases by approximately 84\% as the flow on
an edge increases from 25 to 1,000. Note that on an undirected network with
$n$ nodes, there is an optimal solution with the flow on each edge in $\{0,
2, \dots, n(n-1) \}$.

Table \ref{tbl:flow:sizes} specifies the problem sizes. Note that although
the individual dimensions of the problems are moderate, the resulting number
of variables is large, since a problem with $n$ nodes and $m$ edges yields
$n(n-1)m$ flow variables. The largest problems we solved have 80 nodes,
1,580 edges, and 6,320 commodities. To approach them with an MIP solver,
these problems would require 1,580 binary variables, 9,985,600 continuous
variables and 10,491,200 constraints, even if we replaced the concave
functions by fixed charge costs.

We chose $\epsilon=0.01=1\%$ for the piecewise linear approximation. Here,
we have been able to reduce the number of pieces significantly by using the
tight approximation guarantee of Theorem \ref{th:gen-tight} and the
problem-specific bound of Lemma \ref{lm:flow}. After applying our piecewise
linear approximation approach, we have reduced the number of pieces further
by noting that for low argument values, our approach introduced tangents on
a grid denser than the uniform grid $2, 4, 6, \dots$ For each problem, we
have reduced the number of pieces per cost function by approximately 47 by
using the uniform grid for low argument values, and the grid generated by
our approach elsewhere.
\begin{table}[t]
\begin{center}
\small
\begin{tabular}{rrrrrrrrr}
\# &
$\qquad n$ & $\qquad m$ & $\qquad K$ &
\parbox[][6ex][c]{0.12\textwidth}{\centering Flow Variables} & Pieces \\
\hline
1& 10 & 30 & 90 & 8,100 & 41 \\
2& 20 & 60 & 380 & 22,800 & 77 \\
3& 20 & 95 & 380 & 36,100 & 77 \\
\hline
4& 30 & 90 & 870 & 78,300 & 98  \\
5& 30 & 215 & 870 & 187,050 & 98 \\
\hline
6& 40 & 120 & 1,560 & 187,200 & 113 \\
7& 40 & 390 & 1,560 & 608,400 & 113 \\
\hline
8& 50 & 150 & 2,450 & 367,500 & 124 \\
9& 50 & 610 & 2,450 & 1,494,500 & 124 \\
\hline
10& 60 & 180 & 3,540 & 637,200 & 133 \\
11& 60 & 885 & 3,540 & 3,132,900 & 133 \\
\hline
12& 70 & 210 & 4,830 & 1,014,300 & 141 \\
13& 70 & 1,205 & 4,830 & 5,820,150 & 141 \\
\hline
14& 80 & 240 & 6,320 & 1,516,800 & 148 \\
15& 80 & 1,580 & 6,320 & 9,985,600 & 148 \\
\hline
\end{tabular}
\end{center}
\caption{Network sizes. The column ``Pieces'' indicates the number of pieces
  in each piecewise linear function resulting from the approximation.}
\label{tbl:flow:sizes}
\end{table}

We used an improved version of the dual ascent method described by
Balakrishnan et al. \cite{MR90h:90061} (also known as the primal-dual method
\cite[see e.g.][]{nostd:gw97}) to solve the resulting fixed charge
multicommodity flow problems. The method produces a feasible solution, whose
cost we denote by $Z_{\ref{mp:ip:flow}}^{\mathrm{DA}}$, to problem
(\ref{mp:ip:flow}) and a lower bound $Z_{\ref{mp:ip:flow}}^{\mathrm{LB}}$ on
the optimal value of problem (\ref{mp:ip:flow}). As a result, for this
solution, we obtain an optimality gap $\epsilon_{\mathrm{DA}} =
\frac{Z_{\ref{mp:ip:flow}}^{\mathrm{DA}}}{Z_{\ref{mp:ip:flow}}^{\mathrm{LB}}}-1$
with respect to the piecewise linear problem, and an optimality gap
$\epsilon_{\mathrm{ALL}} = (1+\epsilon)(1+\epsilon_{\mathrm{DA}}) - 1$ with
respect to the original problem.

Table \ref{tbl:flow:comput} summarizes the computational results. We
performed all computations on an Intel Xeon 2.66 GHz. For each problem size
and cost structure, we have averaged the optimality gap, computational time,
and number of edges in the computed solution over 3 randomly-generated
instances.

We obtained average optimality gaps of 3.66\% for problems with moderate
economies of scale, and 4.27\% for problems with strong economies of scale.
This difference in average optimality gap is consistent with computational
experiments in the literature that analyze the difficulty of fixed charge
problems as a function of the ratio of fixed costs to variable costs
\cite{MR90h:90061,MR996585}. Note that the solutions to problems with
moderate economies of scale have more edges than those to problems with
strong economies of scale; in fact, in the latter case, the edges always
form a tree.
\begin{table}[t]
\begin{center}
\small
\begin{tabular}{rrrrrrp{0.04\linewidth}rrrr}
\multirow{2}{*}{\#} && \multicolumn{4}{c}{Moderate economies of scale}
&  & \multicolumn{4}{c}{Strong economies of scale} \\
\cline{3-6} \cline{8-11}
& \qquad & Time & \parbox[][6ex][c]{0.06\textwidth}{\centering\small Sol. Edges}
 & $\epsilon_{\mathrm{DA}}\%$ & $\epsilon_{\mathrm{ALL}}\%$ && Time &
\parbox[][6ex][c]{0.06\textwidth}{\centering\small Sol. Edges}
 & $\epsilon_{\mathrm{DA}}\%$ & $\epsilon_{\mathrm{ALL}}\%$ \\
\hline
1&& 0.13s & 14 & 0.41 & 1.41 && 0.19s & 9 & 0.35 & 1.35 \\
2&& 3.17s & 31 & 1.45 & 2.46 && 3.99s & 19 & 1.06 & 2.07 \\
3&& 3.50s & 25.3 & 1.20 & 2.21 && 5.37s & 19 & 3.38 & 4.42 \\
\hline
4&& 18.5s & 43.7 & 1.94 & 2.96 && 11.7s & 29 & 1.18 & 2.20 \\
5&& 31.3s & 44 & 2.16 & 3.19 && 21.1s & 29 & 3.50 & 4.54 \\
\hline
6&& 1m23s & 61.7 & 2.47 & 3.49 && 27.1s & 39 & 2.20 & 3.22 \\
7&& 2m6s & 59 & 3.24 & 4.28 && 1m11s & 39 & 3.17 & 4.21 \\
\hline
8&& 3m45s & 79 & 2.22 & 3.24 && 1m11s & 49 & 3.42 & 4.46 \\
9&& 6m19s & 74.7 & 3.10 & 4.13 && 2m48s & 49 & 4.22 & 5.26 \\
\hline
10&& 8m45s & 95 & 2.58 & 3.61 && 2m1s & 59 & 3.27 & 4.30 \\
11&& 18m52s & 95.7 & 3.64 & 4.68 && 5m59s & 59 & 4.25 & 5.29 \\
\hline
12&& 16m44s & 101.7 & 2.85 & 3.87 && 2m35s & 69 & 3.77 & 4.81 \\
13&& 39m18s & 115.7 & 4.19 & 5.24 && 9m43s  & 69 & 4.98 & 6.03 \\
\hline
14&& 32m46s & 127.7 & 2.82 & 3.84 && 4m25s & 79 &  4.10 & 5.14 \\
15&& 1h24m & 143 & 5.24 & 6.29 && 15m50s & 79 & 5.67 & 6.73 \\
\hline
\multicolumn{4}{l}{\textbf{Average}} & 2.63 & \textbf{3.66}
&&&& 3.24 & \textbf{4.27}\\
\end{tabular}
\end{center}
\caption{Computational results. The values in column ``Sol. Edges''
  represent the number of edges with positive flow in the obtained
  solutions.}
\label{tbl:flow:comput}
\end{table}

To the best of our knowledge, the literature does not contain exact or
approximate computational results for concave cost multicommodity flow
problems of this size. Bell and Lamar \cite{MR1481977} introduce an exact
branch-and-bound approach for \emph{single-commodity} flows, and perform
computational experiments on networks with up to 20 nodes and 96 edges.
Fontes et al. \cite{MR2003734} propose a heuristic approach based on local
search for single-source single-commodity flows, and present computational
results on networks with up to 50 nodes and 200 edges. They obtain average
optimality gaps of at most 13.81\%, and conjecture that the actual gap
between the obtained solutions and the optimal ones is much smaller. Also
for single-source single-commodity flows, Fontes and Gon\c{c}alves
\cite{MR2334570} propose a heuristic approach that combines local search
with a genetic algorithm, and present computational results on networks with
up to 50 nodes and 200 edges. They obtain optimal solutions for problems
with 19 nodes or less, but do not provide optimality gaps for larger
problems.
\section{Concave Cost Facility Location}
\label{sect:loc}
In the concave cost facility location problem, there are $m$ customers and
$n$ facilities. Each customer $i$ has a demand of $d_i\ge 0$, and needs to
be connected to a facility to satisfy it. Connecting customer $i$ to
facility $j$ incurs a connection cost of $c_{ij} d_i$; the connection costs
$c_{ij}$ are nonnegative and satisfy the metric inequality.

Let $x_{ij} = 1$ if customer $i$ is connected to facility $j$, and $x_{ij} =
0$ otherwise. Then the total demand satisfied by facility $j$ is
$\sum_{i=1}^m d_i x_{ij}$. Each facility $j$ has an associated nondecreasing
concave cost function $\phi_j : \mathbb{R}_+ \to \mathbb{R}_+$. We assume
without loss of generality that $\phi_j(0) = 0$ for $j\in [n]$. At each
facility $j$ we incur a cost of $\phi_j \left( \sum_{i=1}^m d_i c_{ij}
\right)$. The goal is to assign each customer to a facility, while
minimizing the total connection and facility cost.

The concave cost facility location problem can be written as a mathematical
program:
\begin{subequations}
\label{mp:conc:loc}
\begin{align}
Z^*_{\ref{mp:conc:loc}} = 
\min\ &\sum_{j=1}^n \phi_j \left( \sum_{i=1}^m d_i x_{ij}\right)
+ \sum_{j=1}^n \sum_{i=1}^m c_{ij} d_i x_{ij},\\
\text{s.t.}\ &\sum_{j=1}^n x_{ij}=1, \hskip 12em i\in [n],\\
&x_{ij}\ge 0, \hskip 12em i\in [m], j\in [n].
\end{align}
\end{subequations}
Let $D = \sum_{i=1}^m d_i$. We assume that the coefficients $c_{ij}$ and
$d_i$ are integral.

Concave cost facility location has been studied since at least the 1960s
\cite{Kuehn:1963:HPL,Feldman:1966:WLC}. Mahdian and Pal \cite{MR2085471}
developed a $3+\epsilon'$ approximation algorithm for this problem, for any
$\epsilon'>0$. When the problem has unit demands, that is $d_1 = \dots = d_m
= 1$, a wider variety of results become available. In particular, Hajiaghayi
et al. \cite{MR1991628} obtained a 1.861-approximation algorithm. Hajiaghayi
et al. \cite{MR1991628} and Mahdian et al. \cite{MR2247734} described a
1.52-approximation algorithm.

Concerning hardness results, concave cost facility location contains the
classical facility location problem as a special case, and therefore does
not have a polynomial-time approximation scheme, unless $\rmP = \rmNP$, and
does not have a 1.463-approximation algorithm, unless $\rmNP \subseteq
\DTIME\bigl( n^{O(\log \log n)} \bigr)$ \cite{MR1682440}.

As before, problem (\ref{mp:conc:loc}) satisfies the assumptions needed by
Lemma \ref{lm:poly}. We apply the approach of equations
\eqref{eq:poly:lu}--\eqref{eq:poly:bound} to approximate it to within a
factor of $1+\epsilon$, use formulation \eqref{mp:ip:poly} to write the
resulting problem as an integer program, and disaggregate the integer
program:
\begin{subequations}
\label{mp:ip:loc}
\begin{align}
Z^*_{\ref{mp:ip:loc}} = \min\ &\sum_{j=1}^n \sum_{p=0}^P f^p_j y^p_j +
\sum_{j=1}^n \sum_{p=0}^P \sum_{i=1}^m (s_j^p + c_{ij}) d_i x^p_{ij},\\
\text{s.t.}\ &\sum_{j=1}^n \sum_{p=0}^P x^p_{ij}=1, \hskip 10em i\in [m],\\
&0\le x^p_{ij}\le y^p_j, \hskip 6em i\in [m], j\in [n], 
p\in \{0,\dots,P\},\\
&y^p_j\in \{0,1\}, \hskip 8.5em j\in [n], p\in \{0,\dots,P\}.
\end{align}
\end{subequations}
We have obtained a classical facility location problem that has $m$
customers and $Pn$ facilities, with each facility $j$ in the old problem
corresponding to $P$ facilities $(j,p)$ in the new problem. Each coefficient
$f_j^p$ can be viewed as the cost of opening facility $(j,p)$, while $s_j^p
+ c_{ij}$ can be viewed as the unit cost of connecting customer $i$ to
facility $(j,p)$. Note that the new connection costs $s_j^p + c_{ij}$
satisfy the metric inequality. The binary variable $y_j^p$ indicates whether
facility $(j,p)$ is open.

Problem \eqref{mp:ip:loc} is one of the fundamental problems in operations
research \cite{MR1066259,MR2000c:90001}. Hochbaum \cite{MR643582} showed
that the greedy algorithm is a $O(\log n)$ approximation algorithm for it,
even when the connection costs $c_{ij}$ are non-metric. Shmoys et al.
\cite{Shmoys:1997:AAF} gave the first constant-factor approximation
algorithm for this problem, with a factor of 3.16. More recently, Mahdian et
al. \cite{MR2247734} developed a 1.52-approximation algorithm, and Byrka
\cite{Byrka:2007:OBA} obtained a 1.4991-approximation algorithm. The above
hardness results of Guha and Khuller \cite{MR1682440} also apply to this
problem.

Bound \eqref{eq:poly:bound} yields $P \le \bigl\lceil \frac{2U(A,b)}{\log_2
  (1 + 4\epsilon + 4\epsilon^2)} \bigr\rceil$, with $A$ and $b$ being the
constraint matrix and right-hand side vector of problem \eqref{mp:conc:loc}.
We can obtain a lower value for $P$ by taking $l_i = 1$ and $u_i = D$, which
yields $P \le \bigl\lceil \frac{\log_2 D}{\log_2 (1 + 4\epsilon +
  4\epsilon^2)} \bigr\rceil$. The proof of the following lemma is similar
to that of Lemma \ref{lm:flow}.
\begin{lemma}
$Z^*_{\ref{mp:conc:loc}} \le Z^*_{\ref{mp:ip:loc}} \le (1+\epsilon)
  Z^*_{\ref{mp:conc:loc}}$.
\end{lemma}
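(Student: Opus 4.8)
The plan is to follow the proof of Lemma~\ref{lm:flow} almost verbatim, substituting for the ``flow decomposes on a tree'' argument the analogous structural fact about the assignment polytope. First I would put problem~\eqref{mp:conc:loc} into the form required by Lemma~\ref{lm:poly}/Theorem~\ref{th:gen-tight}. Its feasible set $X = \{ x : \sum_{j} x_{ij} = 1 \text{ for each customer } i,\ x_{ij}\ge 0\}$ is a bounded rational polyhedron (a product of $m$ simplices), and after introducing auxiliary variables $\xi_j = \sum_{i=1}^m d_i x_{ij}$ for $j\in[n]$ --- exactly as $\xi_{ij}$ was introduced in the flow case --- the objective $\sum_j \phi_j(\xi_j) + \sum_{i,j} c_{ij}d_i x_{ij}$ becomes a nonnegative, separable, nondecreasing concave function of $(x,\xi)$: the connection-cost terms $c_{ij}d_i x_{ij}$ are (nonnegative, nondecreasing) linear, hence concave, and each summand involves a single coordinate. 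Replacing each $\phi_j$ by its tangent lower envelope $\psi_j$ as in \eqref{eq:pieces-def}--\eqref{mp:pwl:gen} with $l_j = 1$, $u_j = D$, leaving the linear terms unchanged (the degenerate, single-piece instance of the construction), and rewriting via the fixed-charge formulation~\eqref{mp:ip:poly} yields precisely problem~\eqref{mp:ip:loc}. The lower bound $Z^*_{\ref{mp:conc:loc}} \le Z^*_{\ref{mp:ip:loc}}$ is then immediate from the first half of Theorem~\ref{th:gen-tight}, since every $\psi_j$ lies above $\phi_j$ and the linear terms are identical on both sides.

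For the upper bound I must check Assumption~\ref{assum:gen} with $l_j = 1$, $u_j = D$, and this is the one place where the structure of the problem enters. Since the objective is concave and nonnegative and $X$ is a polytope, problem~\eqref{mp:conc:loc} has an optimal solution $x^*$ at a vertex of $X$ \cite{MR0131816}; because $X$ is the product of the $m$ customer simplices, its vertices are exactly the $0/1$ assignments in which every customer is connected to a single facility. Hence, for each facility $j$, the load $\xi^*_j = \sum_{i=1}^m d_i x^*_{ij}$ is a sum of a subset of the integral nonnegative demands $d_i$, so it is a nonnegative integer with $\xi^*_j \le D = \sum_{i=1}^m d_i$; that is, $\xi^*_j \in \{0\}\cup[1,D]$ (the case $D=0$ being trivial). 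Thus Assumption~\ref{assum:gen} holds with $l_j = 1$, $u_j = D$ on the $\xi_j$-coordinates, while on the unapproximated $x_{ij}$-coordinates the required inequality $\psi \le (1+\epsilon)\phi$ holds trivially. Theorem~\ref{th:gen-tight} then gives $Z^*_{\ref{mp:ip:loc}} \le (1+\epsilon) Z^*_{\ref{mp:conc:loc}}$, using that $P = \lceil \log_{1+4\epsilon+4\epsilon^2} D \rceil$ pieces per function suffice for a $1+\epsilon$ guarantee.

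I expect the only mildly delicate point --- and the main thing to get right --- is the bookkeeping around the linear connection-cost terms: one must state explicitly that these terms appear verbatim in both \eqref{mp:conc:loc} and \eqref{mp:ip:loc} and therefore contribute equally to both optimal values, so that the approximation ratio is governed entirely by the $\phi_j$-versus-$\psi_j$ comparison, to which Theorem~\ref{th:gen-tight} applies directly. Equivalently, one absorbs each $c_{ij}d_i x_{ij}$ into a trivial one-piece piecewise-linear function of $x_{ij}$ and invokes the theorem without modification. Everything else is a transcription of the proof of Lemma~\ref{lm:flow}.
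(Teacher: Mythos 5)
Your proposal is correct and matches the paper's intent: the paper proves this lemma by remarking that the argument is ``similar to that of Lemma \ref{lm:flow},'' i.e., take a vertex optimal solution, use the problem's structure to show each facility's load lies in $\{0\}\cup[1,D]$, and invoke Theorem \ref{th:gen-tight}. Your substitution of the product-of-simplices vertex characterization (integral $0/1$ assignments, hence integral loads bounded by $D$) for the tree-decomposition argument is exactly the intended adaptation, and your handling of the linear connection-cost terms is a correct, if slightly more explicit, piece of bookkeeping.
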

Combining our piecewise-linear approximation approach with the
1.4991-approximation algorithm of Byrka \cite{Byrka:2007:OBA}, we obtain the
following result.
\begin{theorem}
There exists a $1.4991+\epsilon'$ approximation algorithm for the concave
cost facility location problem, for any $\epsilon'>0$.
\end{theorem}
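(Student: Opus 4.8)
The plan is to instantiate the general reduction of Section~\ref{sect:algor} with $\gamma = 1.4991$, using Byrka's algorithm \cite{Byrka:2007:OBA} in the role of the $\gamma$-approximation algorithm for the resulting discrete problem. Given a target $\epsilon' > 0$, I would first fix a constant $\epsilon = \epsilon'/1.4991 > 0$, so that $(1+\epsilon)\,1.4991 \le 1.4991 + \epsilon'$.

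Next I would apply the piecewise-linear approximation of equations \eqref{eq:poly:lu}--\eqref{eq:poly:bound} to problem \eqref{mp:conc:loc} with this $\epsilon$, using the problem-specific bounds $l_i = 1$ and $u_i = D$, so that each concave facility cost $\phi_j$ is replaced by a piecewise-linear function with $P + 1 \le 1 + \bigl\lceil \log_2 D / \log_2(1 + 4\epsilon + 4\epsilon^2) \bigr\rceil$ pieces. Since the $d_i$ are integral, $\log_2 D$ is bounded by the input size of \eqref{mp:conc:loc}, and since $\epsilon$ depends only on $\epsilon'$, both the time to build the approximation and the size of the resulting disaggregated instance \eqref{mp:ip:loc} are polynomial in the input size. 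As noted in the text, \eqref{mp:ip:loc} is a \emph{metric} classical facility location instance with $m$ customers and $Pn$ facilities, and by the lemma preceding this theorem its optimal value satisfies $Z^*_{\ref{mp:conc:loc}} \le Z^*_{\ref{mp:ip:loc}} \le (1+\epsilon)\,Z^*_{\ref{mp:conc:loc}}$.

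I would then run Byrka's $1.4991$-approximation algorithm on \eqref{mp:ip:loc}; this takes time polynomial in the size of \eqref{mp:ip:loc}, hence polynomial overall, and returns a feasible solution $(y^p_j, x^p_{ij})$ whose objective value is at most $1.4991\,Z^*_{\ref{mp:ip:loc}}$. Reading off the flow variables, $x_{ij} = \sum_{p=0}^{P} x^p_{ij}$ is a feasible solution of \eqref{mp:conc:loc}, and it is a $1.4991$-approximate solution of the piecewise-linear problem \eqref{mp:pwl:poly} associated with \eqref{mp:conc:loc}, because \eqref{mp:ip:loc} is the disaggregated fixed-charge reformulation \eqref{mp:ip:poly} of that problem. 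By the lemma of Section~\ref{sect:algor}, $x$ is therefore a $(1+\epsilon)\,1.4991 \le 1.4991 + \epsilon'$ approximate solution of \eqref{mp:conc:loc}. (Equivalently, chain $Z^*_{\ref{mp:conc:loc}} \le \phi(x) \le \psi(x) \le 1.4991\,Z^*_{\ref{mp:ip:loc}} \le 1.4991\,(1+\epsilon)\,Z^*_{\ref{mp:conc:loc}}$, using that each $\psi_j$ lies above $\phi_j$ and that the fixed-charge objective upper-bounds $\psi$.)

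There is no real obstacle beyond assembling the lemma preceding the theorem, the reduction of Section~\ref{sect:algor}, and Byrka's result. The points needing a little care are (i) checking that the solution recovered from the disaggregated facility location instance is feasible for \eqref{mp:conc:loc} and that its concave cost is bounded by the piecewise-linear cost of the corresponding disaggregated solution --- which follows from the exactness of the fixed-charge formulation for concave objectives together with $\psi_j \ge \phi_j$ --- and (ii) observing that it is precisely the metric property of the new connection costs $s^p_j + c_{ij}$ that permits invoking Byrka's metric algorithm.
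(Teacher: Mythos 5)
Your proposal is correct and follows exactly the paper's intended argument: the paper proves this theorem by combining the lemma $Z^*_{\ref{mp:conc:loc}} \le Z^*_{\ref{mp:ip:loc}} \le (1+\epsilon) Z^*_{\ref{mp:conc:loc}}$ with the generic reduction of Section~\ref{sect:algor} and Byrka's $1.4991$-approximation, just as you do. You have merely made explicit the details (choice of $\epsilon = \epsilon'/1.4991$, polynomiality via $P \le \lceil \log_2 D / \log_2(1+4\epsilon+4\epsilon^2)\rceil$, metricity of $s^p_j + c_{ij}$, and recovery of a feasible solution from the disaggregated instance) that the paper leaves implicit.
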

We can similarly combine our approach with other approximation algorithms
for classical facility location. For example, by combining it with the
1.52-approximation algorithm of Mahdian et al. \cite{MR2247734}, we obtain a
$1.52+\epsilon'$ approximation algorithm for concave cost facility location.
\section*{Acknowledgments}
We are grateful to Professor Anant Balakrishnan for providing us with the
original version of the dual ascent code. This research was supported in
part by the Air Force Office of Scientific Research, Amazon.com, and the
Singapore-MIT Alliance.
%
%
%
\bibliographystyle{alpha}
\bibliography{concave}
\end{document}